 \newtcolorbox{assbox}{colback=black!5!white,colframe=black!75!black}
  \newtcolorbox{thmbox}{colback=red!5!white,colframe=red!75!black}
\newcommand{\RR}{\mathbb{R}}
\newcommand{\NN}{\mathbb{N}}
\renewcommand{\SS}{\mathbb{S}}
\newcommand{\ZZ}{\mathbb{Z}}
\newcommand{\Cc}{\mathcal{C}}
\newcommand{\Dd}{\mathcal{D}}
\newcommand{\Pp}{\mathcal{P}}
\newcommand{\Xx}{\mathcal{X}}
\newcommand{\E}{\mathbf{E}}
\renewcommand{\d}{\mathrm{d}}
\newcommand{\id}{\mathrm{id}}
\newcommand{\Id}{\mathrm{Id}}
\newtheorem{theorem}{Theorem}[section]
\newtheorem{proposition}[theorem]{Proposition}
\newtheorem{lemma}[theorem]{Lemma}
\newtheorem{corollary}[theorem]{Corollary}
\newtheorem{definition}[theorem]{Definition}
\newtheorem{assumption}{Assumption}
\title{Mean-Field Langevin Dynamics :\\
Exponential Convergence and Annealing}
\author{\name L\'ena\"ic Chizat \email lenaic.chizat@epfl.ch \\
      \addr EPFL
     }
\begin{document}

\maketitle

\begin{abstract}
\emph{Noisy particle gradient descent} (NPGD) is an algorithm to minimize convex functions over the space of measures that include an entropy term. In the many-particle limit, this algorithm is described by a \emph{Mean-Field Langevin} dynamics---a generalization of the Langevin dynamic with a non-linear drift---which is our main object of study. Previous work have shown its convergence to the unique minimizer via non-quantitative arguments.
We prove that this dynamics converges at an exponential rate, under the assumption that a certain family of Log-Sobolev inequalities holds. 
This assumption holds for instance for the minimization of the risk of certain two-layer neural networks, where NPGD is equivalent to standard noisy gradient descent.
We also study the annealed dynamics, and show that for a noise decaying at a logarithmic rate, the dynamics converges in value to the global minimizer of the unregularized objective function.
\end{abstract}

\section{Introduction}\label{sec:introduction}
Let $\Pp_2(\RR^d)$ (resp. $\Pp^a_2(\RR^d)$) be the set of probability measures (resp. absolutely continuous probability measures) with finite second moment on $\RR^d$ and let $G:\Pp_2(\RR^d)\to \RR$ be a convex function which is ``smooth'' in the sense of Assumption~\ref{ass:regularity} below. Our goal is to solve problems of the form
\begin{equation}\label{eq:mainproblem}
\min_{\mu \in \Pp_2^{\text{a}}(\RR^d)} F_\tau(\mu)\quad \text{where}\quad F_\tau(\mu) := G(\mu) + \tau H(\mu)
\end{equation}
with $H(\mu)\coloneqq \int \log(\frac{\d \mu}{\d x})\d\mu$  the entropy of $\mu$ and $\tau>0$ the regularization/temperature parameter. See Section~\ref{sec:applications} for examples of problems of this form (typically with $\tau=0$) that arise in machine learning such as the regularized risk functional of wide two-layer neural networks, or Maximum Mean Discrepancy (MMD) minimization. 

\paragraph{Noisy Particle Gradient Descent (NPGD)} The starting idea of NPGD is to parameterize the measure $\mu$ as a mixture of $m$ particles $\mu=\frac1m \sum_{i=1}^m \delta_{X_i}$. Let $\mathbf{X} = (X_1,\dots,X_m)\in (\RR^d)^m$ encode the position of all particles and consider the function
\begin{equation}\label{eq:Gm}
G_m(\mathbf{X}) \coloneqq G\Big(\frac1m \sum_{i=1}^m \delta_{X_i}\Big).
\end{equation}
Then, NPGD is just noisy gradient descent on $G_m$ with initialization sampled from $\mu_0\in \Pp_2(\RR^d)$. It is defined, for $k\geq 0$, as 
\begin{equation}\label{eq:NPGD}
\mathbf{X}[k+1] =  \mathbf{X}[k] - m\eta \nabla G_m(\mathbf{X}[k]) + \sqrt{2\eta\tau}   \mathbf{Z}[k],\quad \mathbf{X}[0] \sim \mu_0^{\otimes m}
\end{equation}
where $\eta>0$ is the step-size and $ \mathbf{Z}[1],  \mathbf{Z}[2],\dots$ are i.i.d.~standard Gaussian vectors (see Eq.~\eqref{eq:discrete-dynamics} for an equivalent definition of NPGD directly in terms of $G$ and its first-variation).

When $G$ is linear, i.e. $G(\mu)=\int V \d\mu$ for some smooth $V:\RR^d\to \RR$, the particles $X_i$ are independent and each follows the (unadjusted) Langevin algorithm~\citep{ermak1975computer, roberts1996exponential, durmus2017nonasymptotic} given by the stochastic recursion
\begin{align}\label{eq:ULA}
X[k+1] = X[k] -\eta \nabla V(X[k]) + \sqrt{2\eta\tau} Z[k], \quad X[0]\sim \mu_0
\end{align}
and it is thus sufficient to choose $m=1$ in that case. In the general case of a convex and non-linear $G$, the particles will interact in non-trivial ways and $m$ should be taken large, so that a mean-field behavior emerges. 

\paragraph{Mean-Field Langevin} The dynamics obtained in the many-particle $m\to \infty$ and vanishing step-size $\eta\to 0$ limit was called the \emph{Mean-Field Langevin} dynamics in~\cite{hu2019mean} and is our object of interest. In this limit, the distribution $\mu_t$ of particles at time $t=k\eta$ solves the following drift-diffusion partial differential equation (PDE) of \emph{McKean-Vlasov} type:
\begin{equation}\label{eq:PDE}
\partial_t \mu_t = \nabla \cdot \big(\mu_t \nabla V[\mu_t] \big) +  \tau \Delta \mu_t
\end{equation}
where $\nabla \cdot$ stands for the divergence operator and $V[\mu]\in \Cc^1(\RR^d)$ is the \emph{first-variation} of $G$ at $\mu$ (see Definition~\ref{def:variation}). 
This dynamics, which can be interpreted as the gradient flow of $F_\tau$ under the $W_2$ Wasserstein metric~\citep{ambrosio2007gradient}, is a generalization of the Langevin dynamics to a specific form of non-linear drift term.

There is a long line of work around mean-field dynamics~\citep{dobrushin1979vlasov,sznitman1991topics} (see~\cite{lacker2018mean} for an introduction and references) which guarantee that NPGD~\eqref{eq:NPGD} indeed converges to the Mean-Field Langevin dynamics, sometimes with fine quantitative bounds~\citep{lacker2021hierarchies, mei2019mean}. As for the behavior of the Mean-Field Langevin dynamics~\eqref{eq:PDE} itself, it is shown in~\citep{mei2018mean, hu2019mean} that, under suitable coercivity assumptions, $(\mu_t)$ weakly converges to the unique minimizer of $F_\tau$ as $t\to \infty$. {Moreover,~\cite{hu2019mean} remarks that the results from~\cite{eberle2019quantitative} to obtain quantitative rates apply here, but this argument is restricted to the large noise regime and does not exploit the convexity of $G$. These works leave open the question of quantitative guarantees without a strong noise assumption}.

\subsection{Contributions and related work}
Our contributions are the following:
\begin{itemize}[label={--}]
\item We prove that, under a certain uniform log-Sobolev inequality assumption (which is in particular satisfied in the settings of~\citet{mei2018mean, hu2019mean}), solutions to~\eqref{eq:PDE} converge at a global exponential rate to the minimizer of $F_\tau$ (Theorem~\ref{th:exponential-convergence}). The known convergence rate of the Langevin dynamics under a log-Sobolev inequality is recovered as a particular case when $G$ is linear. 
\item We study the annealed dynamics where the noise $\tau=\tau_t$ is time-dependent and decays as $\alpha/\log(t)$ and prove that for $\alpha>0$ large enough, $G(\mu_t)$ converges towards the minimum of the \emph{unregularized} functional $F_0=G$ (Theorem~\ref{th:annealing}).
\item In Section~\ref{sec:applications}, we show that our results apply to noisy gradient descent on infinitely wide two-layer neural networks {and we provide numerical experiments for $G$ being a kernel Maximum Mean Discrepancy (MMD)}.
\end{itemize}

Let us mention that other algorithms to solve problems of the form~\eqref{eq:mainproblem} are possible. \citet{nitanda2020particle} proposed a dual averaging scheme which involves a sequence of Langevin diffusions and enjoys a $O(1/t)$ convergence rate in the mean-field limit. For low-dimensional problems, one can resort to discretizing the measure on a fixed grid, which leads to a convex problem amenable to standard (Bregman) gradient descent algorithms~\citep{tseng2010approximation}. 

The long-time behavior of drift-diffusion PDEs of the form Eq.~\eqref{eq:PDE} has been studied in the mathematical physics literature. General convergence rates (under assumptions that imply a large noise in our context) are proved in~\cite{eberle2019quantitative}. 
For interacting particule systems with an interaction kernel $k$ (discussed in Section~\ref{sec:MMD}), the case where $k(x,y)=h(x-y)$ with $h$ convex can be dealt with using the notion of \emph{displacement convexity}, see~\cite[Chap.~9.6]{villani2021topics}. In contrast, we rely on standard convexity, which corresponds to a positive semi-definite interaction kernel $k$.

Upon completion of this work, we became aware of the paper~\citep{nitanda2022convex} which also proves the exponential convergence of the Mean-Field Langevin dynamics with the same proof technique. The main differences between these two works is that they perform a discrete time analysis while we study the annealed dynamics. These works were conducted independently and simultaneously.

\subsection{Notations}
We use $\Vert \cdot \Vert$ for the Euclidean norm on $\RR^d$. For $\mu,\nu\in \Pp_2(\RR^d)$, $\Pi(\mu,\nu)$ is the set of transport plans, that is, probability measures on $\RR^d\times \RR^d$ with marginals $\mu$ and $\nu$ respectively. The Wasserstein distance $W_2:\Pp_2(\RR^d)^2\to \RR_+$ is defined as the square-root of
\begin{align}\label{eq:W2}
W_2(\mu,\nu)^2 \coloneqq \min_{\gamma \in \Pi(\mu,\nu)} \int_{(\RR^d)^2} \Vert y-x\Vert^2\d\gamma(x,y).
\end{align}
Relevant background on the Wasserstein distance can be found in~\cite{ambrosio2007gradient}. We often identify absolutely continuous probability measures with their density with respect to the Lebesgue measure. A function $f:\RR^d\to \RR$ is said $L$-smooth if its gradient is a $L$-Lipschitz continuous function.

\section{Assumptions and preliminaries}
\label{sec:preliminaries}

\subsection{First-variation and smoothness of $G$}\label{sec:variation}
The Mean-Field Langevin dynamics in Eq.~\eqref{eq:PDE} involves the \emph{first-variation}  $V$ of $G$, defined as follows.

\begin{definition}[First-variation] \label{def:variation}
We say that $G:\Pp_2(\RR^d)\to \RR$ admits a \emph{first-variation} at $\mu\in \Pp_2(\RR^d)$ if there exists a continuous function $V[ \mu] :\RR^d\to \RR$ such that  
\begin{equation}\label{eq:first-variation}
\forall \nu \in \Pp_2(\RR^d),\ \lim_{\epsilon\, \downarrow\, 0}\frac{1}{\epsilon} \big(G((1-\epsilon)\mu + \epsilon\nu) - G(\mu)\big) = \int_{\RR^d} V[\mu](x)\d(\nu-\mu)(x).
\end{equation}
If it exists, the first-variation $V[\mu]$ is unique up to an additive constant.

\end{definition}
The notion of first-variation appears naturally when studying variational problems over $\Pp_2(\RR^d)$ and its precise definition varies across references, see e.g.~\cite[Def.~7.12]{santambrogio2015optimal}. Throughout our work, we make the following regularity assumptions on $G$.

\begin{assbox}
\begin{assumption}[Smoothness of $G$]\label{ass:regularity}
For all $\mu\in \Pp_2(\RR^d)$, $G$ admits a first-variation $V[\mu]\in \Cc^1(\RR^d)$ and $(\mu,x) \to \nabla V[\mu](x)$ is Lipschitz continuous in the following sense: there exists $L>0$ such that
$$
\forall \mu,\nu \in \Pp_2(\RR^d),\quad \forall x,y \in \RR^d,\quad \Vert \nabla V[\mu](x) - \nabla V[\nu](y)\Vert_2 \leq L\big( \Vert x-y\Vert_2 + W_2(\mu,\nu) \big).
$$
\end{assumption}
\end{assbox}

Let us now state a lemma that is useful in our proofs, that gives the evolution of $G$ and $H$ along dynamics $(\mu_t)_{t\in (a,b)}$ in $\Pp_2^a(\RR^d)$ that solve the \emph{continuity equation} (in the sense of distributions):
\begin{equation}\label{eq:continuity-equation}
\partial_t \mu_t =-\nabla \cdot (\mu_tv_t)
\end{equation}
for some time-dependent velocity field $v\in L^2((a,b),L^2(\mu_t))$. Observe that Eq.~\eqref{eq:PDE} is an equation of this form with $v_t = -\nabla V[\mu_t]-\tau \nabla \log(\mu_t)$.

\begin{lemma}[Chain rule]\label{lem:chain-rule}
Let $(\mu_t)_{t\in (a,b)}$ be a weakly continuous solution to Eq.~\eqref{eq:continuity-equation} such that $\nabla \log(\mu_t) \in L^2((a,b),L^2(\mu_t))$. Then $G(\mu_t)$ and $H(\mu_t)$ are absolutely continuous functions of $t$ and it holds for a.e. $t\in (a,b)$,
\begin{align*}
\frac{\d}{\d t} G(\mu_t) = \int_{\RR^d} \nabla V[\mu_t]^\top v_t \d\mu_t &&\text{and}&& \frac{\d}{\d t} H(\mu_t) = \int_{\RR^d} (\nabla \log (\mu_t) )^\top v_t \d\mu_t.
\end{align*}
\end{lemma}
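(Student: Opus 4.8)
The plan is to handle $G$ and $H$ by the same two-sided strategy: bound the increment $G(\mu_t)-G(\mu_s)$ (resp. $H(\mu_t)-H(\mu_s)$) from above and below using convexity and the first-variation, rewrite each bound through the continuity equation, and then let $t\to s$. The basic tool is that testing Eq.~\eqref{eq:continuity-equation} against a sufficiently regular $\phi$ gives
\[
\int_{\RR^d} \phi\,\d(\mu_t-\mu_s) = \int_s^t\!\int_{\RR^d} \nabla\phi^\top v_r\,\d\mu_r\,\d r .
\]
Assumption~\ref{ass:regularity} implies that $\nabla V[\mu]$ grows at most linearly in $\Vert x\Vert$ (and in $W_2(\mu,\cdot)$), so $\nabla V[\mu_r]\in L^2(\mu_r)$ with norm locally bounded in $r$; together with $v\in L^2((a,b),L^2(\mu_t))$ and the curve-length bound $W_2(\mu_s,\mu_r)\le \int_s^r \Vert v_u\Vert_{L^2(\mu_u)}\,\d u$, all the space-time integrals below converge absolutely.

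For $G$, convexity of $G$ and the definition of the first-variation give the tangent inequality $G(\nu)\ge G(\mu)+\int V[\mu]\,\d(\nu-\mu)$, hence, applying it to the two orderings of $(\mu_s,\mu_t)$,
\[
\int_{\RR^d} V[\mu_s]\,\d(\mu_t-\mu_s)\ \le\ G(\mu_t)-G(\mu_s)\ \le\ \int_{\RR^d} V[\mu_t]\,\d(\mu_t-\mu_s).
\]
I would rewrite each side with the testing identity ($\phi=V[\mu_s]$, resp. $\phi=V[\mu_t]$), divide by $t-s$, and let $t\downarrow s$. Using the Lipschitz bound $\Vert\nabla V[\mu_s](x)-\nabla V[\mu_r](x)\Vert\le L\,W_2(\mu_s,\mu_r)$ of Assumption~\ref{ass:regularity}, the endpoint gradient can be replaced by $\nabla V[\mu_r]$ up to an error whose time-average is $O(\sup_{r\in[s,t]}W_2(\mu_s,\mu_r))\to 0$; a Lebesgue-point argument then sends both the upper and the lower difference quotient to $\int \nabla V[\mu_s]^\top v_s\,\d\mu_s$. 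This identifies the right (and, symmetrically, left) derivative at a.e. $s$. The same estimates bound $|G(\mu_t)-G(\mu_s)|$ by $\int_s^t \ell(r)\,\d r$ for some $\ell\in L^1_{\mathrm{loc}}$, which yields absolute continuity, and the fundamental theorem of calculus gives the stated formula.

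For $H$, the identical scheme applies with first-variation $\log\mu$ (the additive constant $+1$ integrates to zero against $\d(\mu_t-\mu_s)$) and convexity of the entropy, giving
\[
\int_{\RR^d} \log\mu_s\,\d(\mu_t-\mu_s)\ \le\ H(\mu_t)-H(\mu_s)\ \le\ \int_{\RR^d} \log\mu_t\,\d(\mu_t-\mu_s),
\]
and the candidate derivative $\int \nabla\log\mu_r^\top v_r\,\d\mu_r$ is well defined precisely because of the hypothesis $\nabla\log\mu_t\in L^2((a,b),L^2(\mu_t))$. The main obstacle is exactly this entropy step: unlike $V[\mu_s]$, the test function $\log\mu_s$ is neither bounded nor $\Cc^1$, so the testing identity cannot be invoked directly and the Lipschitz machinery is unavailable. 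I would resolve this by regularization — replace $\mu_s$ by a Gaussian mollification $\mu_s^\delta$, for which $\log\mu_s^\delta$ is smooth with controlled gradient, apply the identity, and pass to the limit $\delta\to0$, using the finite-Fisher-information bound to control $\nabla\log\mu^\delta$ in $L^2(\mu_r)$ and to identify the limit (equivalently, one may invoke the classical entropy-dissipation computation for continuity-equation solutions). Once the a.e. derivative and an $L^1_{\mathrm{loc}}$ domination are secured for $H$ as well, absolute continuity and the fundamental theorem of calculus complete the proof.
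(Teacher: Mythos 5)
Your treatment of $G$ is sound and genuinely different from the paper's: the paper never argues through linear-structure convexity, but instead shows (Lemma~\ref{lem:Gsmoothness}) that Assumption~\ref{ass:regularity} alone makes $G$ $(-2L)$-displacement semiconvex with Wasserstein subdifferential $\nabla V[\mu]$, and then invokes the general chain rule for semiconvex functionals along absolutely continuous curves from~\cite[Sec.~4.4.E]{ambrosio2007gradient}. One consequence of your route: it needs convexity of $G$ (Assumption~\ref{ass:variational}), which is not among the hypotheses of the lemma --- the lemma is stated under Assumption~\ref{ass:regularity} only, and the paper applies it to prove the equivalence of the two definitions of NPGD, a proposition stated without convexity. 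This is repairable: replace the two-sided tangent inequality by the exact identity of Lemma~\ref{lem:integral-formula}, $G(\mu_t)-G(\mu_s)=\int_0^1\int V[(1-r)\mu_s+r\mu_t]\,\d(\mu_t-\mu_s)\,\d r$, which requires no convexity; your Lebesgue-point argument then goes through, and it works precisely because Assumption~\ref{ass:regularity} supplies the uniform bound $\Vert\nabla V[\mu](x)-\nabla V[\nu](x)\Vert\le L\,W_2(\mu,\nu)$.

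The entropy half, however, has a genuine gap, located exactly at the step you flag yourself. For $H$ the analogue of that uniform Lipschitz-in-$\mu$ control is unavailable: you only know $\nabla\log\mu_s\in L^2(\mu_s)$ for a.e.\ $s$, with no continuity in $s$. Consequently, first, the testing identity applied to $\log\mu_s^\delta$ requires $\nabla\log\mu_s^\delta\in L^2(\mu_r)$ for $r$ near $s$ \emph{uniformly in} $\delta$, which the Fisher information of $\mu_s$ does not give (Gaussian mollification only yields $\Vert\nabla\log\mu_s^\delta(x)\Vert\le C_\delta(1+\Vert x\Vert)$ with $C_\delta\to\infty$ as $\delta\to 0$, so the joint limit $\delta\to 0$, $t\to s$ needs a quantitative interplay that the sketch does not provide). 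Second, mollifying breaks the sandwich itself: the tangent inequality holds for $\log\mu_s$, not $\log\mu_s^\delta$, and the error $\int(\log\mu_s-\log\mu_s^\delta)\,\d(\mu_t-\mu_s)$ is not obviously $o(t-s)$ uniformly in $\delta$. Third, in your upper bound the test function $\log\mu_t$ moves with the endpoint $t$, so identifying its limit as $t\downarrow s$ needs time-regularity of $t\mapsto\nabla\log\mu_t$ that the hypotheses do not provide. Your closing remark that one may ``equivalently invoke the classical entropy-dissipation computation for continuity-equation solutions'' is circular: that computation \emph{is} the $H$-half of the lemma being proven. Note that the paper does not prove it from scratch either --- it cites the displacement convexity of $H$ with subdifferential $\nabla\log\mu$~\cite[Thm.~4.16]{ambrosio2007gradient} together with the chain rule of~\cite[Sec.~4.4.E]{ambrosio2007gradient} --- so the honest fix is to cite that result explicitly, rather than to present the mollification sketch as if it settled the matter.
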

\begin{proof}
Using the vocabulary of analysis in Wasserstein space, the function $H$ is displacement convex with subdifferential $\nabla \log \mu$~\cite[Thm.~4.16]{ambrosio2007gradient}. Also we prove in Lemma~\ref{lem:Gsmoothness} that $G$ is $(-2L)$-displacement convex with subdifferential $\nabla V[\mu_t]$. Then the claim is a consequence of~\cite[Sec.~4.4.E]{ambrosio2007gradient}.
\end{proof}

\subsection{Characterization of the minimizer}
We recall the optimality conditions for $F_\tau$ which have been proved in several works (see e.g.~\cite[Lem.~10.4]{mei2018mean} or~\cite[Prop.~2.5]{hu2019mean}) and require the following assumptions.
\begin{assbox}
\begin{assumption}\label{ass:variational} The function $G$ is convex and $F_\tau=G+\tau H$ admits a minimizer $\mu^*_\tau$.
\end{assumption}
\end{assbox}
We stress that by convexity we mean standard convexity for the linear structure in $\Pp_2(\RR^d)$, i.e.
$$
\forall \mu,\nu \in \Pp_2(\RR^d), \forall \alpha \in [0,1],\quad
G(\alpha \mu + (1-\alpha)\nu)\leq \alpha G(\mu) + (1-\alpha) G(\nu).
$$

\begin{proposition}\label{prop:optim}
Under Assumption~\ref{ass:regularity} and~\ref{ass:variational}, the minimizer $\mu^*_\tau$ of $F$ is unique and satisfies
\begin{equation}\label{eq:optim-cond}
\mu^*_\tau \propto e^{-V[\mu^*_\tau]/\tau}.
\end{equation}
\end{proposition}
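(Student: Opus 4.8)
The plan is to obtain uniqueness from strict convexity and the explicit form of $\mu^*_\tau$ from a first-order variational argument, exactly as is standard for such entropy-regularized problems.

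\emph{Uniqueness.} Since $t\mapsto t\log t$ is strictly convex on $(0,\infty)$, the entropy $H$ is strictly convex on $\Pp_2^a(\RR^d)$ for the linear structure: for distinct densities $\mu\neq\nu$ and $\alpha\in(0,1)$ one has $H(\alpha\mu+(1-\alpha)\nu)<\alpha H(\mu)+(1-\alpha)H(\nu)$. Combined with the convexity of $G$ from Assumption~\ref{ass:variational} and $\tau>0$, the functional $F_\tau=G+\tau H$ is strictly convex, hence has at most one minimizer; existence being assumed, $\mu^*_\tau$ is unique.

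\emph{Necessity of the optimality condition.} Write $\mu^*=\mu^*_\tau$, fix an absolutely continuous competitor $\nu$ (taken in a rich enough class, see below), and set $\mu_\epsilon:=(1-\epsilon)\mu^*+\epsilon\nu\in\Pp_2^a(\RR^d)$. By convexity of $\epsilon\mapsto F_\tau(\mu_\epsilon)$ the difference quotient $\epsilon^{-1}(F_\tau(\mu_\epsilon)-F_\tau(\mu^*))$ is nondecreasing in $\epsilon$, and by minimality it is nonnegative, so its right limit at $\epsilon=0$ exists and is $\ge 0$. I would evaluate this limit termwise. The $G$-part equals $\int_{\RR^d} V[\mu^*]\,\d(\nu-\mu^*)$ directly by Definition~\ref{def:variation} (using $V[\mu^*]\in\Cc^1$ from Assumption~\ref{ass:regularity}). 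For the $H$-part, the convexity of $\phi(t)=t\log t$ makes the integrand difference quotient $\epsilon^{-1}(\phi(\mu_\epsilon)-\phi(\mu^*))$ monotone in $\epsilon$ with pointwise limit $(1+\log\mu^*)(\nu-\mu^*)$; passing to the limit (monotone convergence, after separating the possibly $+\infty$-valued and the integrable parts) gives $\int_{\RR^d}\log\mu^*\,\d(\nu-\mu^*)$, the constant $1$ dropping out since $\nu-\mu^*$ has zero total mass. Altogether the first-order condition reads
$$\int_{\RR^d}\big(V[\mu^*]+\tau\log\mu^*\big)\,\d(\nu-\mu^*)\ \ge\ 0.$$

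\emph{From the inequality to the explicit form.} Set $g:=V[\mu^*]+\tau\log\mu^*$ and $c:=\int g\,\d\mu^*$ (finite since $H(\mu^*)<\infty$ at the minimizer). Testing the displayed inequality with approximate point masses $\nu\to\delta_x$ (within the admissible class) gives $g(x)\ge c$ for a.e. $x$, while the choice $\nu=\mu^*$ together with $g\ge c$ forces $\int(g-c)\,\d\mu^*=0$ with a nonnegative integrand, hence $g=c$ $\mu^*$-a.e. If $\mu^*$ vanished on a set of positive measure, then $g=-\infty$ there, contradicting $g\ge c$; thus $\mu^*>0$ a.e., the identity $g=c$ holds Lebesgue-a.e. on $\RR^d$, and exponentiating $\tau\log\mu^*=c-V[\mu^*]$ yields $\mu^*\propto e^{-V[\mu^*]/\tau}$, the proportionality constant being fixed by $\int\mu^*=1$.

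\emph{Main obstacle.} The delicate point is the rigorous first variation of the entropy: the logarithmic singularity of $\log\mu^*$ and the exchange of limit and integral. Convexity of $t\mapsto t\log t$ is what rescues the computation (monotone difference quotients plus monotone convergence), but one must still secure integrability of $\log\mu^*$ against both $\nu$ and $\mu^*$; I would do this by restricting the competitors $\nu$ to bounded, compactly supported densities that are bounded below on their support, a class still rich enough to pin down $g$ almost everywhere. A secondary subtlety is upgrading the identity from ``$\mu^*$-a.e.'' to ``a.e. on $\RR^d$'', which is precisely where the everywhere lower bound $g\ge c$ and the resulting full support of $\mu^*$ are used.
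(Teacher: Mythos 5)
Your proof is correct and takes essentially the same route as the paper's, which sketches exactly this argument---uniqueness from strict convexity of $H$, a first-order condition forcing $V[\mu^*_\tau]+\tau\log\mu^*_\tau$ to be constant $\mu^*_\tau$-a.e., and full support of $\mu^*_\tau$ forced by the entropy term---and defers the details to \cite[Lem.~10.4]{mei2018mean}; your write-up simply supplies those details, including the integrability care around $\log\mu^*_\tau$. One cosmetic slip: $\int (g-c)\,\d\mu^*_\tau = 0$ holds by the definition of $c$, not because of ``the choice $\nu=\mu^*_\tau$'' (which makes the variational inequality trivial).
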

The uniqueness comes from the strict convexity of $H$. For Eq.~\eqref{eq:optim-cond}, one first derives the first order optimality condition, which require that $V[\mu_\tau^*]+\tau \log(\mu_\tau^*)$ must be a constant $\mu_\tau^*$-almost everywhere. Then one shows that $\mu_\tau^*$ has positive density everywhere due to the entropy term, and concludes. We refer to~\cite[Lem.~10.4]{mei2018mean} for details.

\subsection{Noisy Particle Gradient Descent (NPGD)}
The NPGD algorithm has been defined in Section~\ref{sec:introduction} via the function $G_m$. We now give an alternative definition of this algorithm involving the first-variation $V$ of $G$.

Assume that $G$ satisfies Assumption~\ref{ass:regularity}, let $V$ be its first-variation and fix $m\in \NN^*$.  For $i\in [m]$, initialize randomly $X_{i,0} \overset{iid}{\sim} \mu_0\in \Pp_2(\RR^d)$ and define recursively for $k\geq 0$
\begin{equation}\label{eq:discrete-dynamics}
\left\{
\begin{aligned}
X_{i,k+1} &= X_{i,k} - \eta \nabla V[\hat \mu_k](X_{i,k}) + \sqrt{2 \eta\tau} Z_{i,k} \\
\hat \mu_k &= \frac{1}{m} \sum_{i=1}^m \delta_{X_{i,k}}
\end{aligned}
\right.
\end{equation}
where $\eta>0$ is the step-size and $Z_{i,k}\sim \mathcal{N}(0,I)$ are iid standard Gaussian random variables.
\begin{proposition}
Under Assumption~\eqref{ass:regularity}, the two definitions of NPGD in Eq.~\eqref{eq:NPGD} and in Eq.~\eqref{eq:discrete-dynamics} are equivalent.
\end{proposition}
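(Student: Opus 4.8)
The plan is to show that the two recursions coincide coordinate by coordinate, which reduces everything to the single identity
$$m\,\nabla_{X_i} G_m(\mathbf{X}) = \nabla V[\hat\mu](X_i), \qquad \hat\mu = \tfrac1m\sum_{j=1}^m\delta_{X_j},$$
for every $i\in[m]$ and every configuration $\mathbf{X}$. Once this is established, substituting it into Eq.~\eqref{eq:NPGD} turns the drift term $-m\eta\,\nabla_{X_i}G_m$ into $-\eta\,\nabla V[\hat\mu_k](X_{i,k})$, and since the Gaussian increments and the i.i.d.\ initialization $\mu_0^{\otimes m}$ are identical in both formulations, Eq.~\eqref{eq:NPGD} and Eq.~\eqref{eq:discrete-dynamics} describe the same random process.

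To prove the identity I would fix $i$, a direction $h\in\RR^d$, and set $\phi(s) := G_m(X_1,\dots,X_i+sh,\dots,X_m) = G(\mu_s)$ with $\mu_s := \hat\mu + \tfrac1m(\delta_{X_i+sh}-\delta_{X_i})$, the goal being $\phi'(0) = \tfrac1m \nabla V[\hat\mu](X_i)^\top h$. The first-variation of Definition~\ref{def:variation} only supplies derivatives along chordal directions $\nu-\mu$, so the first step is a chain-rule identity along the straight segment joining $\hat\mu$ to $\mu_s$: differentiating $\lambda\mapsto G((1-\lambda)\hat\mu + \lambda\mu_s)$ and applying Eq.~\eqref{eq:first-variation} at each interior point yields
$$\phi(s)-\phi(0) = \int_0^1 \int_{\RR^d} V[\mu_{s,\lambda}]\,\d(\mu_s-\hat\mu)\,\d\lambda, \qquad \mu_{s,\lambda} := (1-\lambda)\hat\mu + \lambda\mu_s.$$
Because $\mu_s - \hat\mu = \tfrac1m(\delta_{X_i+sh}-\delta_{X_i})$ is a two-point signed measure, the inner integral collapses to $\tfrac1m\big(V[\mu_{s,\lambda}](X_i+sh)-V[\mu_{s,\lambda}](X_i)\big)$.

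The remaining step is to expand this difference as $s\to 0$. Writing it through the gradient and invoking Assumption~\ref{ass:regularity}, I would bound the error $\|\nabla V[\mu_{s,\lambda}](X_i+rsh) - \nabla V[\hat\mu](X_i)\|$ by $L\big(s\|h\| + W_2(\mu_{s,\lambda},\hat\mu)\big)$; since $W_2(\mu_{s,\lambda},\hat\mu)\le \lambda\,W_2(\mu_s,\hat\mu)\le \tfrac{1}{\sqrt m}\,s\|h\|$, this error is $O(s)$ uniformly in $\lambda\in[0,1]$. Hence the inner difference equals $\tfrac{s}{m}\nabla V[\hat\mu](X_i)^\top h + o(s)$ uniformly in $\lambda$, the $\lambda$-integral is trivial, and dividing by $s$ and letting $s\to0$ gives $\phi'(0)=\tfrac1m\nabla V[\hat\mu](X_i)^\top h$, which is exactly the required identity.

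I expect the main obstacle to be precisely this transition from the flat (chordal) first-variation---the only derivative notion supplied by Definition~\ref{def:variation}---to the spatial gradient produced by transporting a single particle; the Lipschitz-in-measure regularity of $\nabla V$ in Assumption~\ref{ass:regularity} is exactly what makes the limit interchange rigorous and uniform in $\lambda$. Note that the chain-rule Lemma~\ref{lem:chain-rule} cannot be invoked directly here, since the empirical measures $\mu_s$ are atomic and violate its hypothesis $\nabla\log\mu_s\in L^2(\mu_s)$, which is why the self-contained chordal argument above is the safer route.
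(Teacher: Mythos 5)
Your proof is correct, and it takes a genuinely different, more self-contained route than the paper. The paper's proof considers the curve $\mu_t=\frac1m\sum_i\delta_{X_i+tY_i}$, observes that it solves the continuity equation with velocity $v_t(X_i+tY_i)=Y_i$, and invokes the chain rule of Lemma~\ref{lem:chain-rule} to obtain $\frac{\d}{\d t}G_m(\mathbf{X}+t\mathbf{Y})\vert_{t=0}=\frac1m\sum_i\nabla V[\mu_0](X_i)^\top Y_i$, hence the same key identity $m\nabla_{X_i}G_m(\mathbf{X})=\nabla V[\hat\mu](X_i)$ that you target. You instead re-derive the chordal integral formula (which is exactly the paper's appendix Lemma~\ref{lem:integral-formula}) and carry out the Taylor expansion with the Lipschitz bounds of Assumption~\ref{ass:regularity} by hand; in substance this reproduces the paper's appendix Lemma~\ref{lem:Gsmoothness} (differentiability of $G$ along pushforward curves) specialized to a one-particle perturbation. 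Your closing remark is a sharp and valid observation: as literally stated, Lemma~\ref{lem:chain-rule} assumes $\nabla\log(\mu_t)\in L^2(\mu_t)$, which fails for the atomic curves appearing here, so the paper's one-line citation is not rigorous as written---the appropriate internal reference would be Lemma~\ref{lem:Gsmoothness}, whose proof is essentially your argument. What the paper's route buys is brevity by delegating to general machinery; what yours buys is that it actually closes this gap. Two minor points: (i) the mixture coupling (stay put with probability $1-\lambda$, move optimally with probability $\lambda$) gives $W_2(\mu_{s,\lambda},\hat\mu)\leq\sqrt{\lambda}\,W_2(\mu_s,\hat\mu)$, not $\lambda\,W_2(\mu_s,\hat\mu)$ as you wrote; since $\sqrt{\lambda}\leq 1$ your uniform $O(s)$ estimate is unaffected. (ii) Like the paper, you compute only directional (Gateaux) derivatives of $G_m$; identifying them with the gradient appearing in Eq.~\eqref{eq:NPGD} uses that these derivatives are linear in $h$ and depend continuously on $\mathbf{X}$, which your Lipschitz estimates supply but which deserves a sentence.
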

\begin{proof}
For $\mathbf{X}\in (\RR^d)^m$ and $\mathbf{Y}\in (\RR^d)^m$ define $\mu_t=\frac1m \sum_{i=1}^m \delta_{X_i+tY_i}$. It satisfies the continuity equation~\eqref{eq:continuity-equation} with velocity field $v_t(X_i+tY_i)=Y_i$. By Lemma~\ref{lem:chain-rule}, it holds
\begin{align*}
\frac{\d}{\d t} G_m(\mathbf{X}+ t\mathbf{Y})\vert_{t=0} 
&= \frac{\d}{\d t} G(\mu_{t})\vert_{t=0} = \int_{\RR^d} \nabla V[\mu_{0}](x)^\top v_0(x) \d\mu_0(x) = \frac1m \sum_{i=1}^m \nabla V[\mu_{0}](X_i)^\top Y_i.
\end{align*}
This proves that $\forall i\in [m]$, $m\nabla_{X_i} G_m(\mathbf{X})=\nabla V[\mu_0](X_i)$ and thus the update equations in Eq.~\eqref{eq:NPGD} and Eq.~\eqref{eq:discrete-dynamics} are the same.
\end{proof}

\subsection{Mean-Field Langevin dynamics}
Given Eq.~\eqref{eq:discrete-dynamics} standard results about mean-field systems tell us that as $m\to \infty$, the random measure $\hat \mu_k$ becomes deterministic, so that in the limit (and taking also the small-step size limit $\eta\to0$) the particles trajectories are given by i.i.d.~samples from the following stochastic differential equation (SDE)
\begin{align}\label{eq:continuous-dynamics}
\left\{
\begin{aligned}
\d X_t &= - \nabla V[\mu_t](X_t)\d t + \sqrt{2\tau} \d B_t, \quad X_0 \sim \mu_0\\
\mu_t &= \mathrm{Law}(X_t)
\end{aligned}
\right.
\end{align}
where $(B_t)_{t\geq 0}$ is a Brownian motion. As mentioned in the introduction, the law $(\mu_t)$ of a solution to this SDE solves the following PDE which is our main object of study: 
\begin{align}\label{eq:MFL}
\partial_t \mu_t = \nabla \cdot \big(\mu_t \nabla V[\mu_t] \big) + \tau \Delta \mu_t
\end{align}
where $\nabla \cdot$ stands for the divergence operator. Standard results about this class of PDEs guarantee its well-posedness, i.e. the existence of a unique solution, under Assumption~\ref{ass:regularity} (see e.g.~\citep[Thm.~3.3]{huang2021distribution} or~\cite{ambrosio2007gradient} for an approach based on the gradient flow structure which applies here thanks to Lemma~\ref{lem:Gsmoothness} which states that $G$ is $(-2L)$-displacement convex).

Let us now study the convergence of $(\mu_t)_{t\geq 0}$ to the global minimum of $F_\tau$.

\section{Exponential convergence of Mean-Field Langevin dynamics}
\label{sec:convergence}

For $\mu, \nu\in \Pp_2^a(\RR^d)$ with $\mu$ absolutely continuous w.r.t.~$\nu$ we define 
the \emph{relative entropy} (a.k.a.~Kullback-Leibler divergence) by
$$
H(\mu|\nu) \coloneqq \int_{\RR^d} \log\Big(\frac{\d\mu}{\d\nu}\Big)\d\mu,
$$
and the \emph{relative Fisher information} by
$$
I(\mu|\nu) \coloneqq \int_{\RR^d} \left\Vert \nabla \log \frac{\d\mu}{\d\nu}\right\Vert^2\d\mu .
$$

\begin{definition}[Log-Sobolev inequality]
We say that $\nu\in \Pp_2^a(\RR^d)$ satisfies a logarithmic Sobolev inequality with constant $\rho>0$ (in short $\mathrm{LSI}(\rho)$) if for all $\mu \in \Pp_2^a(\RR^d)$ absolutely continuous w.r.t. $\nu$, it holds
\begin{align}
H(\mu|\nu) \leq \frac{1}{2\rho} I(\mu|\nu).
\end{align}
\end{definition}
This inequality can be interpreted as a $2$-\L{}ojasiewicz gradient inequality for the functional $\mu \mapsto H(\mu|\nu)=\int V\d\mu + H(\mu)$ (where we have posed $V = -\log \nu$) in the Wasserstein geometry~\citep{otto2000generalization} and thus directly implies the exponential convergence of its Wasserstein gradient flow. 
This corresponds to our objective function in the linear case $G(\mu)=\int V \d\mu$, and in this case exponential convergence towards minimizers is thus guaranteed when $\nu=e^{-V}$ satisfies a Log-Sobolev inequality. 

In the general case, we make an analogous assumption that such an inequality holds \emph{uniformly} for $e^{-V[\mu_t]/\tau}$ throughout the dynamics.

\begin{assbox}
\begin{assumption}[Uniform log-Sobolev]\label{ass:log-sobolev}
There exists $\rho_\tau>0$ such that $\forall \mu\in \Pp_2(\RR^d)$ it holds $e^{-V[\mu]/\tau}\in L^1(\RR^d)$ and the probability measure $\nu \propto e^{-V[\mu]/\tau}$  satisfies $\mathrm{LSI}(\rho_\tau)$.
\end{assumption}
\end{assbox}

Remember that $V[\mu]$ is defined up to a constant term, and in this section, we fix this constant so that $e^{-V[\mu]/\tau}\in \Pp_2(\RR^d)$. Let us recall two criteria for a probability measure to satisfy a Log-Sobolev inequality:
\begin{itemize}[label={--}]
\item If $\nabla^2 V \succeq \rho I_d$ then $e^{-V}\in \Pp(\RR^d)$ satisfies $\mathrm{LSI}(\rho)$~\citep{bakry1985diffusions};
\item if $\nu$ satisfies $\mathrm{LSI}(\rho)$ and $\tilde \nu =e^{-\psi}\nu \in \Pp(\RR^d)$ is a perturbation of $\nu$ with $\psi\in L^\infty(\RR^d)$ then $\tilde \nu$ satisfies  $\mathrm{LSI}(\tilde \rho)$ with $\tilde \rho = \rho e^{\inf \psi-\sup \psi}$~\citep{holley1987logarithmic}.
\end{itemize}
These two criteria are standard, but many finer criteria are known, such as integral conditions~\citep{wang2001logarithmic}, Lyapunov conditions~\citep{cattiaux2010note} or criteria for mixture distributions~\citep{chen2021dimension} (see also Section~\ref{sec:NN}).
Our main result regarding the Mean-Field Langevin dynamics~\eqref{eq:continuous-dynamics} is the following.

\begin{thmbox}
\begin{theorem}\label{th:exponential-convergence}
Under Assumptions~\ref{ass:regularity},~\ref{ass:variational} and~\ref{ass:log-sobolev}, let $\mu_0\in \Pp_2(\RR^d)$ be such that $F_\tau(\mu_0)<\infty$. For $t\geq 0$, it holds
\begin{align}\label{eq:convergence-rate}
F_\tau(\mu_t) - F_\tau(\mu^*_\tau) \leq e^{-2\tau \rho_\tau t} (F_\tau(\mu_0)-F_\tau(\mu^*_\tau)).
\end{align}
\end{theorem}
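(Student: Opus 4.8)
The plan is to run the standard free-energy dissipation (entropy) method, with two twists specific to the mean-field setting: the reference measure drifts along the flow, and convexity of $G$ is needed to control the free-energy gap. The whole argument reduces to chaining a dissipation identity, a gap inequality, and the log-Sobolev inequality, followed by Grönwall.

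First I would compute the dissipation of $F_\tau$ along the flow. The PDE~\eqref{eq:MFL} is a continuity equation~\eqref{eq:continuity-equation} with velocity field $v_t = -(\nabla V[\mu_t] + \tau \nabla \log \mu_t)$, so Lemma~\ref{lem:chain-rule} gives
\begin{equation*}
\frac{\d}{\d t} F_\tau(\mu_t) = \int_{\RR^d} (\nabla V[\mu_t] + \tau \nabla \log \mu_t)^\top v_t \d\mu_t = -\int_{\RR^d} \Vert \nabla V[\mu_t] + \tau \nabla \log \mu_t\Vert^2 \d\mu_t.
\end{equation*}
Introducing the reference measure $\nu_t \propto e^{-V[\mu_t]/\tau}$, for which $\tau \nabla \log \nu_t = -\nabla V[\mu_t]$, the integrand equals $\tau^2 \Vert \nabla \log(\mu_t/\nu_t)\Vert^2$, so that $\frac{\d}{\d t} F_\tau(\mu_t) = -\tau^2 I(\mu_t | \nu_t)$.

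Next comes the nonlinear heart of the argument: bounding the free-energy gap by the relative entropy to $\nu_t$. Convexity of $G$ (Assumption~\ref{ass:variational}) together with the definition of the first-variation yields the subgradient inequality $G(\mu^*_\tau) \geq G(\mu_t) + \int V[\mu_t]\,\d(\mu^*_\tau - \mu_t)$. Substituting into $F_\tau(\mu_t) - F_\tau(\mu^*_\tau)$ and expanding both $H(\mu_t|\nu_t)$ and $H(\mu^*_\tau|\nu_t)$ through the identity $H(\mu|\nu_t) = H(\mu) + \tfrac{1}{\tau}\int V[\mu_t]\,\d\mu + \log\!\int e^{-V[\mu_t]/\tau}$, the common log-normalizing constant cancels and one obtains
\begin{equation*}
F_\tau(\mu_t) - F_\tau(\mu^*_\tau) \leq \tau\big( H(\mu_t|\nu_t) - H(\mu^*_\tau|\nu_t)\big) \leq \tau H(\mu_t|\nu_t),
\end{equation*}
the last step discarding the nonnegative term $\tau H(\mu^*_\tau|\nu_t)$. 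Finally the uniform log-Sobolev inequality (Assumption~\ref{ass:log-sobolev}) gives $H(\mu_t|\nu_t) \leq \tfrac{1}{2\rho_\tau} I(\mu_t|\nu_t)$, and writing $D(t) := F_\tau(\mu_t) - F_\tau(\mu^*_\tau) \geq 0$ the three displays chain into
\begin{equation*}
\frac{\d}{\d t} D(t) = -\tau^2 I(\mu_t|\nu_t) \leq -2\tau^2 \rho_\tau H(\mu_t|\nu_t) \leq -2\tau \rho_\tau D(t),
\end{equation*}
whence Grönwall's lemma delivers $D(t)\leq e^{-2\tau\rho_\tau t}D(0)$, the claimed rate.

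The main obstacle is the gap inequality: in the linear case $\nu_t$ is fixed and coincides with the minimizer, so the gap is exactly a relative entropy, whereas here $\nu_t$ moves with $\mu_t$ and is not optimal. Convexity is precisely what allows linearizing $G$ at $\mu_t$ while still producing a clean upper bound, with the spurious $-\tau H(\mu^*_\tau|\nu_t)$ term fortuitously carrying the favorable sign. A secondary, more technical point is the rigorous justification of the chain rule: one must verify that the solution is regular enough, namely $\nabla \log \mu_t \in L^2((0,T),L^2(\mu_t))$ with finite dissipation, for Lemma~\ref{lem:chain-rule} to apply and for every integral above to be finite, which I would handle via the well-posedness and regularization theory referenced after Eq.~\eqref{eq:MFL}.
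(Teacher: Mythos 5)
Your proposal is correct and follows essentially the same route as the paper: the dissipation identity $\frac{\d}{\d t}F_\tau(\mu_t)=-\tau^2 I(\mu_t|\nu_t)$, the convexity-based gap bound $F_\tau(\mu_t)-F_\tau(\mu^*_\tau)\leq \tau H(\mu_t|\nu_t)$ (which is exactly the upper half of the paper's ``entropy sandwich'' Lemma~\ref{lem:sandwich}, proved the same way via the subgradient inequality and cancellation of the normalizing constant), the uniform log-Sobolev inequality, and Gr\"onwall. The only difference is presentational: you inline the sandwich argument rather than isolating it as a lemma.
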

\end{thmbox}
 
\begin{proof}
Let $\nu_t=e^{-V[\mu_t]/\tau}$. By Lemma~\ref{lem:chain-rule} applied to $v_t=-\nabla V[\mu_t]-\tau\nabla \log(\mu_t)$, we have
\begin{align}\label{eq:energy}
\frac{d}{dt} F_\tau(\mu_t) = - \int_{\RR^d}\left\Vert \nabla V[\mu_t]+ \tau \nabla \log(\mu_t)\right\Vert^2\d\mu_t = -\tau^2 I(\mu_t|\nu_t).
\end{align}
Note that although Lemma~\ref{lem:chain-rule} requires some regularity estimates, they can be bypassed here thanks to general results about Wasserstein gradient flows~\cite[Thm.~5.3 (v)]{ambrosio2007gradient}.
Combining this energy identity with the log-Sobolev inequality and Lemma~\ref{lem:sandwich}, it follows
\begin{align*}
\frac{d}{dt} \big(F_\tau(\mu_t)-F_\tau(\mu^*_\tau)\big) &= - \tau^2I(\mu_t|\nu_t) \leq - 2\rho_\tau \tau^2 H(\mu_t|\nu_t) \leq - 2\rho_\tau \tau (F(\mu_t)-F(\mu^*))
\end{align*}
which is a $2$-\L{}ojasiewicz gradient inequality for $F_\tau$. By integrating in time we get Eq.~\eqref{eq:convergence-rate}.
\end{proof}
In the proof, we see that we could relax Assumption~\ref{ass:log-sobolev} and require the Log-Sobolev inequality to hold only for all $\mu \in \Pp_2(\RR^d)$ such that $F_\tau(\mu)\leq F_\tau(\mu_0)$. Also, Assumption~\ref{ass:regularity} is only a general assumption that guarantees  well-posedness of the dynamics and the energy decay formula Eq.~\eqref{eq:energy}; this regularity assumption can be relaxed on a case by case basis. Convergence guarantees in parameter space directly follow from the previous theorem.
\begin{corollary}
Under the assumptions of Theorem~\ref{th:exponential-convergence}, for $t\geq 0$ we have
\begin{align*}
H(\mu_t|\mu^*_\tau) \leq \frac{1}{\tau} e^{-2 \tau \rho_\tau  t} (F_\tau(\mu_0)-F_\tau(\mu^*)) &&\text{and}&&
W_2^2(\mu_t,\mu^*_\tau) \leq \frac{2e^{-2\tau\rho_\tau t}}{\tau \rho_\tau}  \big(F_\tau(\mu_0)-F_\tau(\mu^*)\big).
\end{align*}
\end{corollary}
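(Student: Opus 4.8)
The plan is to chain two entropy/transportation inequalities onto the value-convergence bound from Theorem~\ref{th:exponential-convergence}. The crucial link is a lower bound of the form $F_\tau(\mu)-F_\tau(\mu^*_\tau)\geq \tau H(\mu|\mu^*_\tau)$, which converts a suboptimality gap in objective value into a relative-entropy gap to the minimizer, after which a Talagrand inequality converts the entropy gap into a $W_2$ gap.

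First I would establish this lower bound directly from Assumption~\ref{ass:variational} and Proposition~\ref{prop:optim} (it is exactly the lower half of the sandwich inequality invoked in the proof of the theorem). Convexity of $G$ gives the first-order inequality $G(\mu)-G(\mu^*_\tau)\geq \int V[\mu^*_\tau]\d(\mu-\mu^*_\tau)$. Using the optimality condition $\mu^*_\tau\propto e^{-V[\mu^*_\tau]/\tau}$, write $V[\mu^*_\tau]=-\tau\log\mu^*_\tau-\tau\log Z^*$ with $Z^*$ the normalizing constant. Substituting this into the first-order inequality and regrouping with the $\tau(H(\mu)-H(\mu^*_\tau))$ term, the constants $Z^*$ cancel and one obtains
$$
F_\tau(\mu)-F_\tau(\mu^*_\tau)\geq \tau\int_{\RR^d}\log\Big(\frac{\d\mu}{\d\mu^*_\tau}\Big)\d\mu=\tau H(\mu|\mu^*_\tau).
$$
Applying this at $\mu=\mu_t$ and combining with Theorem~\ref{th:exponential-convergence}, then dividing by $\tau$, yields the first claim:
$$
\tau H(\mu_t|\mu^*_\tau)\leq F_\tau(\mu_t)-F_\tau(\mu^*_\tau)\leq e^{-2\tau\rho_\tau t}\big(F_\tau(\mu_0)-F_\tau(\mu^*_\tau)\big).
$$

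For the Wasserstein estimate I would invoke the Otto--Villani theorem, which states that any probability measure satisfying $\mathrm{LSI}(\rho)$ also satisfies the Talagrand transportation inequality $W_2^2(\cdot,\nu)\leq \frac{2}{\rho}H(\cdot|\nu)$. The point is that $\mu^*_\tau$ itself satisfies $\mathrm{LSI}(\rho_\tau)$: taking $\mu=\mu^*_\tau$ in Assumption~\ref{ass:log-sobolev} produces the measure $\nu\propto e^{-V[\mu^*_\tau]/\tau}$, which by Proposition~\ref{prop:optim} equals $\mu^*_\tau$, so the uniform log-Sobolev assumption applies to $\mu^*_\tau$ with constant $\rho_\tau$. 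Hence $W_2^2(\mu_t,\mu^*_\tau)\leq \frac{2}{\rho_\tau}H(\mu_t|\mu^*_\tau)$, and chaining with the first claim gives
$$
W_2^2(\mu_t,\mu^*_\tau)\leq \frac{2}{\tau\rho_\tau}e^{-2\tau\rho_\tau t}\big(F_\tau(\mu_0)-F_\tau(\mu^*_\tau)\big).
$$

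The only genuinely nonelementary ingredient is the Otto--Villani implication $\mathrm{LSI}(\rho)\Rightarrow T_2(\rho)$; the rest is algebra built on the convexity of $G$ and the explicit Gibbs form of the minimizer. The one point I would be careful about is identifying the correct base measure for the log-Sobolev/Talagrand inequality: here it must be the minimizer $\mu^*_\tau$ (justified via the fixed-point optimality condition), and not one of the instantaneous Gibbs measures $\nu_{\mu_t}\propto e^{-V[\mu_t]/\tau}$ that appear along the trajectory in the proof of Theorem~\ref{th:exponential-convergence}.
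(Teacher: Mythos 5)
Your proof is correct and follows essentially the same route as the paper: the first bound comes from the lower half of the ``entropy sandwich'' (Lemma~\ref{lem:sandwich}, which you re-derive inline via convexity and the Gibbs form of $\mu^*_\tau$ exactly as the paper does) chained with Theorem~\ref{th:exponential-convergence}, and the second from the Otto--Villani Talagrand inequality applied to $\mu^*_\tau$. Your explicit justification that $\mu^*_\tau$ satisfies $\mathrm{LSI}(\rho_\tau)$ --- by taking $\mu=\mu^*_\tau$ in Assumption~\ref{ass:log-sobolev} and using the fixed-point identity of Proposition~\ref{prop:optim} --- is a point the paper leaves implicit, and is a welcome clarification.
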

\begin{proof}
The first inequality follows from Theorem~\ref{th:exponential-convergence} and Lemma~\ref{lem:sandwich}. For the second one, it follows from the fact that if $\nu$ satisfies $\mathrm{LSI}(\rho)$, then it satisfies the \emph{Talagrand inequality}, which states that $\forall \mu \in \Pp_2(\RR^d)$, $W_2^2(\mu,\nu)\leq \frac{2}{\rho} H(\mu|\nu)$, as proved in~\cite{otto2000generalization}.
\end{proof}

The following lemma establishes inequalities which are key to handle the non-linear aspect of the dynamics (when $G$ is linear, they become trivial equalities).
\begin{lemma}[Entropy Sandwich]\label{lem:sandwich}
Under Assumption~\ref{ass:regularity},~\ref{ass:variational} and~\ref{ass:log-sobolev}, let $\mu^*_\tau$ be the unique minimizer of $F_\tau$. For all $\mu\in \Pp_2(\RR^d)$, letting $\nu \coloneqq e^{-V[\mu]/\tau} \in \Pp_2(\RR^d)$, it holds $$ \tau H(\mu|\mu^*_\tau) \leq F_\tau(\mu)-F_\tau(\mu^*_\tau) \leq  \tau H(\mu|\nu).$$
\end{lemma}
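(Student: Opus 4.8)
The plan is to reduce both inequalities to the subgradient inequality furnished by the convexity of $G$, after rewriting the two relative entropies in terms of the first-variation $V$. The starting observation is that convexity of $G$ together with Definition~\ref{def:variation} yields, for every $\mu,\mu'\in\Pp_2(\RR^d)$, the subgradient bound
\[
G(\mu') \geq G(\mu) + \int_{\RR^d} V[\mu]\,\d(\mu'-\mu),
\]
because $\epsilon\mapsto G((1-\epsilon)\mu+\epsilon\mu')$ is convex on $[0,1]$ and its right-derivative at $0$ equals $\int V[\mu]\,\d(\mu'-\mu)$. I will also use the optimality condition of Proposition~\ref{prop:optim}, which under the normalization convention of this section reads $\mu^*_\tau = e^{-V[\mu^*_\tau]/\tau}$, i.e. $V[\mu^*_\tau] = -\tau\log\mu^*_\tau$, together with $\nu = e^{-V[\mu]/\tau}$, i.e. $V[\mu] = -\tau\log\nu$.

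For the left inequality, I would expand $\tau H(\mu|\mu^*_\tau) = \tau H(\mu) + \int V[\mu^*_\tau]\,\d\mu$ using $V[\mu^*_\tau]=-\tau\log\mu^*_\tau$. Substituting the identity $F_\tau(\mu^*_\tau)=G(\mu^*_\tau)-\int V[\mu^*_\tau]\,\d\mu^*_\tau$ (which itself follows from $\tau H(\mu^*_\tau)=-\int V[\mu^*_\tau]\,\d\mu^*_\tau$), the target $\tau H(\mu|\mu^*_\tau)\leq F_\tau(\mu)-F_\tau(\mu^*_\tau)$ simplifies to
\[
G(\mu) - G(\mu^*_\tau) \geq \int_{\RR^d} V[\mu^*_\tau]\,\d(\mu-\mu^*_\tau),
\]
which is exactly the subgradient inequality applied at the pair $(\mu^*_\tau,\mu)$. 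So this direction is a direct consequence of convexity and the optimality characterization.

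For the right inequality, I would similarly expand $\tau H(\mu|\nu) = \tau H(\mu) + \int V[\mu]\,\d\mu$ using $V[\mu]=-\tau\log\nu$, reducing the target $F_\tau(\mu)-F_\tau(\mu^*_\tau)\leq\tau H(\mu|\nu)$ to showing $F_\tau(\mu^*_\tau)\geq G(\mu)-\int V[\mu]\,\d\mu$. Applying the subgradient inequality at $(\mu,\mu^*_\tau)$ gives $G(\mu^*_\tau)\geq G(\mu)+\int V[\mu]\,\d(\mu^*_\tau-\mu)$; adding $\tau H(\mu^*_\tau)$ to both sides and recognizing that $\int V[\mu]\,\d\mu^*_\tau+\tau H(\mu^*_\tau)=\tau H(\mu^*_\tau|\nu)$ yields
\[
F_\tau(\mu^*_\tau) \geq G(\mu)-\int_{\RR^d} V[\mu]\,\d\mu+\tau H(\mu^*_\tau|\nu).
\]
The extra term is nonnegative since relative entropy is nonnegative, which closes the argument.

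The main obstacle is the right-hand inequality: unlike the left one it does not collapse to a single application of convexity, and the decisive step is recognizing that the leftover term is precisely $\tau H(\mu^*_\tau|\nu)\geq 0$. I would also take care throughout that the integrals are finite (this is where $F_\tau(\mu)<\infty$ and the $L^1$ integrability in Assumption~\ref{ass:log-sobolev} enter) and that the normalization constants of $\mu^*_\tau$ and of $\nu$ are treated consistently, so that the pointwise identities $V[\mu^*_\tau]=-\tau\log\mu^*_\tau$ and $V[\mu]=-\tau\log\nu$ hold as written.
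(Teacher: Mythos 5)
Your proposal is correct and follows essentially the same route as the paper: both rest on the subgradient inequality $G(\mu')\geq G(\mu)+\int V[\mu]\,\d(\mu'-\mu)$ from convexity, the optimality identity $\mu^*_\tau=e^{-V[\mu^*_\tau]/\tau}$, the rewriting of relative entropies via $V[\mu^*_\tau]=-\tau\log\mu^*_\tau$ and $V[\mu]=-\tau\log\nu$, and dropping the nonnegative term $\tau H(\mu^*_\tau|\nu)$ for the upper bound. The only difference is organizational (you reduce each target inequality first and then invoke convexity, while the paper applies convexity first and then regroups terms into relative entropies), which is immaterial.
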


\begin{proof}
The convexity of $G$ implies that, $\forall \mu,\nu \in \Pp_2(\RR^d)$,
$
\frac1\epsilon (G((1-\epsilon)\mu+\epsilon\nu)-G(\mu))  \leq G(\nu)-G(\mu).
$
So, passing to the limit in the definition of the first-variation (Definition~\ref{def:variation}), we recover the usual convexity inequality (interpreting $V[\mu]$ as the gradient of $G$ at $\mu$):
\begin{equation}\label{eq:convexity}
G(\nu) \geq G(\mu) +\int V[\mu]\d(\nu-\mu).
\end{equation}
Invoking this inequality twice with the role of $\mu$ and $\mu^*$ exchanged, it holds
\begin{align*}
\int V[\mu^*]\d(\mu-\mu^*) \leq G(\mu)-G(\mu^*) \leq \int V[\mu]\d(\mu-\mu^*).
\end{align*}
Recalling $F_\tau(\mu)=G(\mu)+ \tau H(\mu)$, it holds, on the one hand,
\begin{align*}
F_\tau(\mu)-F_\tau(\mu^*) &\leq \int V[\mu]\d\mu + \tau H(\mu) -\int V[\mu] \d \mu^* - \tau H(\mu^*) \\
&= \tau H(\mu|\nu) - \tau H(\mu^*|\nu)
\leq  \tau H(\mu|\nu) .
\end{align*}
On the other hand, using the fact that $\mu^*_\tau=e^{-V[\mu^*]/\tau}$ (Proposition~\ref{prop:optim}), it holds
\begin{align*}
F_\tau(\mu)-F_\tau(\mu^*_\tau) &\geq \int V[\mu^*]\d\mu + \tau H(\mu) -\int V[\mu^*] \d \mu^* - \tau H(\mu^*) \\
&= \tau H(\mu|\mu^*) -  \tau H(\mu^*|\mu^*)
=  \tau H(\mu|\mu^*) .\qedhere
\end{align*}
\end{proof}

\section{Convergence of the annealed dynamics}
\label{sec:annealing}
We now turn our attention to the ``annealed'' Mean-Field Langevin dynamics
\begin{align}\label{eq:MFL-annealed}
\partial \mu_t = \nabla \cdot \big(\mu_t \nabla V[\mu_t] \big) +  \tau_t \Delta \mu_t
\end{align}
with a time-dependent \emph{temperature} parameter $\tau_t$ that converges to $0$. The existence of a unique solution from any $\mu_0\in \Pp_2(\RR^d)$ follows again from the theory of McKean-Vlasov equations, now with time inhomogeneous coefficients (see e.g.~\citep[Thm.~3.3]{huang2021distribution}). As a side note, notice that~\eqref{eq:MFL-annealed} cannot strictly be interpreted as a Wasserstein gradient flow anymore, but some aspects of the theory of Wasserstein gradient flows have been extended to cover the case of time-dependent diffusion coefficients~\cite[Sec.~6.2]{ferreira2018gradient}. 

The linear case when $G(\mu)=\int V\d\mu$ has been considered in numerous works (e.g.~\cite{holley1989asymptotics, geman1986diffusions, miclo1992recuit, raginsky2017non, tang2021simulated}). It is known in particular~\citep{miclo1992recuit} that under suitable coercivity assumptions for $V$ and if $\tau_t=C/\log(t)$ for some $C>0$ large enough, then $G(\mu_t)$ converges to $\min_{\mu\in \Pp_2(\RR^d)} G(\mu) = \min_{x\in \RR^d} V(x)$.

Here we show that a similar guarantee holds in our more general context.

\begin{thmbox}
\begin{theorem}[Convergence of annealed dynamics]\label{th:annealing}
Suppose Assumptions~\ref{ass:regularity}, \ref{ass:variational}  and~\ref{ass:log-sobolev} hold for all $\tau>0$, and moreover assume that:
\begin{itemize}[label={--}]
\item the Log-Sobolev constants satisfy $\rho_\tau \geq C_0 e^{-\alpha^*/\tau}$ for some $\alpha^*,C_0>0$,
\item $G$ is lower-bounded, 
\item $(\tau_t)_t$ is smooth, decreases, and for $t$ large it holds $\tau_t=\alpha/\log(t)$ for some $\alpha>\alpha^*$.
\end{itemize}
Let $\mu_0\in \Pp_2(\RR^d)$ be such that $F_{\tau_0}(\mu_0)<\infty$.  Then for each $\epsilon>0$, there exists $C,C'>0$ such that
\begin{align}\label{eq:convergence-rate-annealed}
F_{\tau_t}(\mu_t) - F_{\tau_t}(\mu^*_{\tau_t}) &\leq Ct^{-\big(1-\frac{\alpha^*}{\alpha}-\epsilon\big)},\\
\intertext{and}
\label{eq:log-rate}
G(\mu_t) - \inf G &\leq C'\frac{\log \log t}{\log t}.
\end{align}
\end{theorem}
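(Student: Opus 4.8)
The plan is to track the time-dependent free-energy gap $D_t := F_{\tau_t}(\mu_t) - F_{\tau_t}(\mu^*_{\tau_t})$ and to show it obeys a \L{}ojasiewicz-type differential inequality perturbed by a source term coming from the motion of $\tau_t$. Writing $\nu_t \propto e^{-V[\mu_t]/\tau_t}$ and applying Lemma~\ref{lem:chain-rule} to the velocity field $v_t = -\nabla V[\mu_t] - \tau_t\nabla\log\mu_t$, the extra explicit dependence on $\tau_t$ contributes a term $\dot\tau_t H(\mu_t)$, so that $\frac{\d}{\d t}F_{\tau_t}(\mu_t) = -\tau_t^2 I(\mu_t|\nu_t) + \dot\tau_t H(\mu_t)$. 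Since $F_{\tau}(\mu) = G(\mu) + \tau H(\mu)$ is affine in $\tau$, the value function $\tau\mapsto F_\tau(\mu^*_\tau)$ is concave and, by the envelope theorem (using uniqueness of $\mu^*_\tau$ from Proposition~\ref{prop:optim}), differentiable with derivative $H(\mu^*_\tau)$; hence $\frac{\d}{\d t}F_{\tau_t}(\mu^*_{\tau_t}) = \dot\tau_t H(\mu^*_{\tau_t})$. Combining the two and invoking the log-Sobolev inequality (Assumption~\ref{ass:log-sobolev}) together with the entropy sandwich (Lemma~\ref{lem:sandwich}) exactly as in Theorem~\ref{th:exponential-convergence} yields
\begin{equation*}
\frac{\d}{\d t}D_t \leq -2\rho_{\tau_t}\tau_t\, D_t + \dot\tau_t\big(H(\mu_t) - H(\mu^*_{\tau_t})\big).
\end{equation*}

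The next step is to control the annealing source term. Since $\dot\tau_t < 0$, an upper bound on it requires a lower bound on the entropy difference, and more generally I would seek a bound $|H(\mu_t) - H(\mu^*_{\tau_t})| \leq \kappa_t$ with $\kappa_t$ of order $\log(1/\tau_t)$. The natural handle is the identity
\begin{equation*}
H(\mu_t) - H(\mu^*_{\tau_t}) = H(\mu_t\,|\,\mu^*_{\tau_t}) - \tfrac{1}{\tau_t}\textstyle\int V[\mu^*_{\tau_t}]\,\d(\mu_t - \mu^*_{\tau_t}),
\end{equation*}
whose first term lies in $[0, D_t/\tau_t]$ by the sandwich and whose second term I would bound using that $V[\mu^*_{\tau_t}]$ is $L$-smooth together with the closeness $W_2(\mu_t,\mu^*_{\tau_t})^2 \leq \tfrac{2}{\rho_{\tau_t}\tau_t}D_t$ (Talagrand's inequality). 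I would also record the a priori estimate that $L$-smoothness forces $\mu^*_\tau \propto e^{-V[\mu^*_\tau]/\tau}$ to be at least as diffuse as a Gaussian of covariance $\sim(\tau/L)I$, giving $H(\mu^*_\tau) \leq \tfrac d2\log(1/\tau) + C$, and a second-moment bound on $\mu_t$ along the flow controlling $H(\mu_t)$ from below.

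With $\kappa_t$ in hand I would integrate $\dot D_t \leq -r_t D_t + s_t$, where $r_t = 2\rho_{\tau_t}\tau_t \geq 2C_0\alpha\, t^{-\alpha^*/\alpha}/\log t$ (from $\rho_\tau\geq C_0 e^{-\alpha^*/\tau}$ and $\tau_t = \alpha/\log t$) and $s_t = |\dot\tau_t|\kappa_t$. Setting $\beta := \alpha^*/\alpha < 1$, the hypothesis $\alpha>\alpha^*$ guarantees $\int^t r_u\,\d u \to\infty$, so the homogeneous part decays super-polynomially and the solution is driven by its quasi-static value $s_t/r_t$. Concretely I would verify that $\phi_t := 2 s_t/r_t$ is a supersolution for large $t$ (this uses $t\,r_t \to \infty$, which holds precisely because $\beta<1$), yielding $D_t \leq \phi_t \lesssim t^{-(1-\beta)}$ up to logarithmic factors; absorbing those logs into an arbitrary $\epsilon>0$ gives Eq.~\eqref{eq:convergence-rate-annealed}.

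To pass to $G(\mu_t) - \inf G$ in Eq.~\eqref{eq:log-rate} I would write $G(\mu_t) - \inf G = [G(\mu_t) - G(\mu^*_{\tau_t})] + [G(\mu^*_{\tau_t}) - \inf G]$. The first bracket equals $D_t - \tau_t(H(\mu_t) - H(\mu^*_{\tau_t}))$, hence is at most $D_t + \tau_t\kappa_t = O(\tau_t\log(1/\tau_t))$. For the regularization bias I would compare $\mu^*_\tau$ against a Gaussian mollification at scale $\sigma$ of a near-minimizer of $G$: using $F_\tau(\mu^*_\tau)\leq F_\tau(\text{mollifier})$, the smoothness of $G$ to bound the approximation error by $O(\sigma^2)$, and the entropy bound $H(\mu^*_\tau)\leq\tfrac d2\log(1/\tau)+C$, then optimizing $\sigma\sim\sqrt\tau$, gives $G(\mu^*_\tau)-\inf G = O(\tau\log(1/\tau))$. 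Since $\tau_t = \alpha/\log t$ makes $\tau_t\log(1/\tau_t)\asymp \tfrac{\log\log t}{\log t}$ while $D_t$ is negligible, both brackets are $O\!\big(\tfrac{\log\log t}{\log t}\big)$. I expect the main obstacle to be exactly the entropy-difference estimate $\kappa_t = O(\log(1/\tau_t))$: it is needed simultaneously to make the source small enough for \eqref{eq:convergence-rate-annealed} and to convert the free-energy gap into the value gap \eqref{eq:log-rate}, and it is delicate because the degeneration $\rho_{\tau_t}\sim e^{-\alpha^*/\tau_t}$ makes naive moment bounds control $H(\mu_t)$ only up to order $\log t$ rather than the required $\log\log t$—closing this gap seems to require genuinely exploiting that $\mu_t$ stays close to the well-concentrated minimizer $\mu^*_{\tau_t}$, plausibly through a bootstrap in which the improved bound on $D_t$ feeds back into the second-moment/entropy control.
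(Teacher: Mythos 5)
Your architecture coincides with the paper's: the same differential inequality for $D_t=F_{\tau_t}(\mu_t)-F_{\tau_t}(\mu^*_{\tau_t})$ (envelope theorem giving $h'(\tau)=H(\mu^*_\tau)$, then LSI plus the entropy sandwich of Lemma~\ref{lem:sandwich} for the dissipation), the same quasi-static integration of $\dot D_t\leq -r_tD_t+s_t$, and the same Gaussian-mollification argument at scale $\sigma^2\sim\tau$ for the regularization bias. However, the step you yourself flag as ``the main obstacle''---controlling the source term $\dot\tau_t\big(H(\mu_t)-H(\mu^*_{\tau_t})\big)$---is a genuine gap, on two counts. First, the route you sketch does not close: bounding $\frac1{\tau_t}\int V[\mu^*_{\tau_t}]\,\d(\mu_t-\mu^*_{\tau_t})$ via smoothness and Talagrand gives $W_2(\mu_t,\mu^*_{\tau_t})\lesssim(\rho_{\tau_t}\tau_t)^{-1/2}\sqrt{D_t}$, and since $\rho_{\tau_t}\sim e^{-\alpha^*/\tau_t}=t^{-\alpha^*/\alpha}$ this factor blows up polynomially; tracking the resulting terms through the ODE, the scheme only survives roughly when $\alpha^*/\alpha<1/2$, so it cannot prove the theorem in the stated generality, and no amount of routine bootstrapping obviously repairs it. Second, you misdiagnose the required precision: a bound $\kappa_t=O(\log\log t)$ is \emph{not} needed. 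Because the source is multiplied by $|\dot\tau_t|\sim 1/(t(\log t)^2)$ and balanced against $r_t=2\rho_{\tau_t}\tau_t\gtrsim t^{-\alpha^*/\alpha}/\log t$, any bound of the form $-\tau_t\big(H(\mu_t)-H(\mu^*_{\tau_t})\big)\leq C D_t+C'$ (i.e.\ an entropy difference of order $\log t$) already yields the quasi-static value $s_t/r_t\lesssim t^{-(1-\alpha^*/\alpha)}$ up to logarithms, which the $\epsilon$ in \eqref{eq:convergence-rate-annealed} absorbs.

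The missing ingredient is exactly the paper's Lemma~\ref{lem:entropy-functional}, which is elementary and avoids Talagrand entirely: the LSI for $e^{-V[\mu_0]}$ forces, via Herbst's argument, quadratic growth $V[\mu_0](x)\geq C_1\Vert x\Vert^2-C_2$; convexity of $G$ transfers this to $G(\mu)\geq 2C_1M_2(\mu)-C_2$; combined with the Gaussian comparison $-H(\mu)\leq\sigma^{-2}M_2(\mu)+C(\sigma)$ one gets $F_\tau(\mu)\geq C_1M_2(\mu)-C_2'$ and hence $-H(\mu)\leq C_1''F_\tau(\mu)+C_2''$ uniformly in $\tau\in(0,\tau_0]$. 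Together with $\tau_t H(\mu^*_{\tau_t})=h(\tau_t)-G(\mu^*_{\tau_t})\leq C$ (lower-boundedness of $G$ and boundedness of $h$), this gives the affine-in-$D_t$ source bound directly, and it also supplies the uniform bound $-H(\mu_t)\leq C$ that Step~2 of the value-gap estimate \eqref{eq:log-rate} needs. Note finally a sign slip in your bias estimate: to bound $G(\mu^*_\tau)-\inf G$ from $F_\tau(\mu^*_\tau)\leq F_\tau(\text{mollifier})$ you need a \emph{lower} bound on $H(\mu^*_\tau)$ (again a moment estimate), not the upper bound $H(\mu^*_\tau)\leq\frac d2\log(1/\tau)+C$ you cite; the paper sidesteps this by bounding $h(\tau)-h(0)\leq\frac L2\sigma^2+\tau H(\tilde\mu_\sigma)$, which involves only the mollifier's entropy.
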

\end{thmbox}

We can make the following comments:
\begin{itemize}[label={--}]
\item The lower-bound assumed on $\rho_\tau$ is natural when one has in mind the Holley and Stroock criterion given in Section~\ref{sec:convergence}. In Section~\ref{sec:applications}, we show a lower bound of this form on a concrete example related to two-layer neural networks.
\item The bounds of Theorem~\ref{th:annealing} exhibit a two time-scales phenomenon: the dynamics $(\mu_t)$ converges at a polynomial rate to the regularization path $(\mu^*_{\tau_t})$ (in relative entropy or $W_2^2$ distance, thanks to the ``entropy sandwich'' Lemma~\ref{lem:sandwich} or the Talagrand inequality) but the regularization path only converges at a logarithmic rate to the optimal value $\inf G$, because of the slow decay of $\tau_t$.
\item The slow decay of $\tau_t$ is an inconvenience but it cannot be improved. It is known that in the linear case $G(\mu)=\int V\d\mu$, convergence is lost if $\tau_t$ decays faster~\cite[Sec.~3]{holley1989asymptotics} (in fact, taking $\tau_t=\alpha/\log(t)$ with $\alpha>0$ too small already breaks convergence).
\end{itemize}

\begin{proof}
Our proof is partly inspired by~\citet{miclo1992recuit}, as revisited by~\cite{ tang2021simulated}. 

\textbf{Step 1.} Consider the function that returns the values of the regularization path
$$
h(\tau) \coloneqq F_\tau(\mu_{\tau}^*) = \min_{\mu\in \Pp_2(\RR^d)} G(\mu) +\tau H(\mu).
$$
As an infimum of affine functions, $h$ is concave and since the minimizer $\mu^*_\tau$ is unique, $h$ is differentiable for $\tau>0$ and its derivative is $h'(\tau)=H(\mu^*_\tau)$. We focus on $t\geq t_0$ so that $\tau_t=\alpha/\log(t)$. By Lemma~\ref{lem:chain-rule} applied to $v_t=-\nabla V[\mu_t]+\tau_t\nabla \log(\mu_t)$ (here again, the regularity assumptions of Lemma~\ref{lem:chain-rule} can be bypassed using the gradient flow-like structure, see~\cite[Thm.~6.9]{ferreira2018gradient}), we have
\begin{align*}
\frac{d}{dt} \big(F_{\tau_t}(\mu_t) - F_{\tau_t}(\mu_{\tau_t}^*)\big)
&= -\int \Vert \nabla V[\mu_t] + \tau_t\nabla \log(\mu_t) \Vert^2 \d\mu_t  +\tau'_t H(\mu_t) -\tau'_t h'(\tau_t)\\
&\leq - \tau_t^2 I(\mu_t|\nu_t) + \tau'_t \big( H(\mu_t) - H(\mu^*_{\tau_t})\big)
\end{align*}
where we introduced the probability measure $\nu_t \propto e^{-V[\mu_t]/\tau_t}$. On the one hand, we have by the Log-Sobolev inequality and the ``entropy sandwich'' Lemma~\ref{lem:sandwich},
$$
\tau^2_t I(\mu_t|\nu_t) \geq 2\tau^2_t \rho_{\tau_t} H(\mu_t|\nu_t) \geq 2\rho_{\tau_t}\tau_t \big(F_{\tau_t}(\mu_t) - F_{\tau_t}(\mu_{\tau_t}^*)\big).
$$
On the other hand, by Lemma~\ref{lem:entropy-functional} below, it holds for some $C_2,C_3>0$ independent from $\mu$ and $t$,
{
\begin{align*}
- \tau_t(H(\mu_t)-H(\mu^*_{\tau_t})) 
&\leq C_2\tau_t F_{\tau_t}(\mu_t) + \tau_t C_3 + F_{\tau_t}(\mu^*_{\tau_t}) - G(\mu^*_{\tau_t})\\
&\leq C_2\tau_t(F_{\tau_t}(\mu_t)-F_{\tau_t}(\mu^*_{\tau_t})) + (1+C_2\tau_t)F_{\tau_t}(\mu^*_{\tau_t}) - G(\mu^*_{\tau_t})+\tau_t C_3\\
&\leq C'_2 (F_{\tau_t}(\mu_t) - F_{\tau_t}(\mu^*_{\tau_t}))  + C'_3
\end{align*}

where in the last step, we used that  $G$ is lower bounded and $h(\tau)=F_{\tau}(\mu^*_{\tau})$ is bounded for $\tau \in [0,\tau_0]$}. Combining the previous estimates, we get that for any $\epsilon>0$, there exists $C_1,C_2,C_3>0$ such that 
\begin{align*}
\frac{d}{dt} \big(F_{\tau_t}(\mu_t) - F_{\tau_t}(\mu_{\tau_t}^*)\big) 
&\leq - 2\rho_{\tau_t}\tau_t \big(F_{\tau_t}(\mu_t) - F_{\tau_t}(\mu_{\tau_t}^*)\big) - C_2 \frac{\tau_t'}{\tau_t}  \big(F_{\tau_t}(\mu_t) - F_{\tau_t}(\mu_{\tau_t}^*)\big)- \frac{\tau_t'}{\tau_t} C_3\\
&\leq  \frac{- 2C_1 \alpha t^{-\frac{\alpha^*}{\alpha}} + C_2t^{-1}}{\log t}\big(F_{\tau_t}(\mu_t)-F_{\tau_t}(\mu^*_{\tau_t})\big) +C_3 \frac{t^{-1}}{\log t}
\end{align*}
where we used $\tau_t =\alpha/\log(t)$, $\tau'_t=-\alpha/(t(\log t)^2)$ and $\rho_{\tau_t}\geq C_0 t^{-\alpha^*/\alpha}$. In passing, the first inequality in the above display guarantees that $F_{\tau_t}(\mu_t) - F_{\tau_t}(\mu_{\tau_t}^*)$ remains finite at all time because $\log \tau_t\in \Cc^1$, which justifies the fact that we can consider only $t$ large enough in the rest of the proof.

It follows that for any $\epsilon>0$ such that $\epsilon <1-\alpha^*/\alpha$, for $t$ large enough and some $C,C'>0$,
\begin{align*}
\frac{d}{dt} \big(F_{\tau_t}(\mu_t) - F_{\tau_t}(\mu_{\tau_t}^*)\big) 
&\leq - C t^{-\frac{\alpha^*}{\alpha}-\epsilon} \big(F_{\tau_t}(\mu_t) - F_{\tau_t}(\mu_{\tau_t}^*) \big)+C't^{-1}{/\log(t)}.
\end{align*}
Now define
$$
Q(t)\coloneqq \big(F_{\tau_t}(\mu_t) - F_{\tau_t}(\mu_{\tau_t}^*)\big) - \frac{C'}{C}t^{-1+\frac{\alpha^*}{\alpha}+\epsilon}
$$
which satisfies
\begin{align}
\frac{d}{dt} Q(t) \leq - C t^{-\frac{\alpha^*}{\alpha}-\epsilon} Q(t)-C't^{-1} +Ct^{-1}{/\log(t)} +\frac{C'(1-\frac{\alpha^*}{\alpha}-\epsilon)}{C} t^{-2+\frac{\alpha^*}{\alpha}+\epsilon}.
\end{align}
Observe that the term $-C't^{-1}$ dominates the two last terms for $t$ large enough. Thus for $t\geq t_*$ large enough, $\frac{d}{dt} Q(t)\leq -Ct^{-\frac{\alpha^*}{\alpha}-\epsilon} Q(t)$ which implies $Q(t)\leq Q(t_*)\exp(-C\int_{t_*}^t s^{-\frac{\alpha^*}{\alpha}-\epsilon}\d s)$. As a consequence
$$
F_{\tau_t}(\mu_t) - F_{\tau_t}(\mu_{\tau_t}^*) \leq \frac{C'}{C}t^{-1+\frac{\alpha^*}{\alpha}+\epsilon} + Q(t_*)\exp\Big(-\frac{C}{\kappa}(t^{\kappa}-t_*^\kappa)\Big)
$$
and thus $F_{\tau_t}(\mu_t) - F_{\tau_t}(\mu_{\tau_t}^*) \leq C'' t^{-\kappa}$ because $\kappa\coloneqq 1-\frac{\alpha^*}{\alpha}-\epsilon>0$ and $Q(t^*)$ is finite. This proves  Eq.~\eqref{eq:convergence-rate-annealed}. 

\textbf{Step 2.} Let us now prove Eq.~\eqref{eq:log-rate}, under the assumption that $G$ admits a minimizer $\mu_0^*\in \Pp_2(\RR^d)$. The proof can be easily adapted to the general case by choosing $\mu_0^*$ as a quasi-minimizer such that $G(\mu_0^*)\leq \inf G +\epsilon$ and taking $\epsilon$ arbitrarily small.  Remember that $h(0)=G(\mu_0^*)=F_0(\mu^*_0)$, so
\begin{align*}
G(\mu_t)-G(\mu_0^*) &= F_{\tau_t}(\mu_t)-F_{\tau_t}(\mu_{\tau_t}^*) + F_{\tau_t}(\mu_{\tau_t}^*)-F_{0}(\mu_{0}^*) -\tau_t H(\mu_t)  \\
&\leq Ct^{-\kappa}+ (h(\tau_t)-h(0)) + C' \tau_t
\end{align*}
where we have used the bound  $-H(\mu_t)\leq C_1F_{\tau_t}(\mu_t)+C_2$ from Lemma~\ref{lem:entropy-functional}, which is uniformly bounded for $t\geq 0$ by some $C'$ thanks to Step 1.

The rest of the proof consists in bounding $h(\tau)-h(0)$ via an approximation argument. Let $g_\sigma(x)=(2\pi \sigma^2)^{-d/2}\exp(-\Vert x\Vert^2/(2\sigma^2))$ be the standard Gaussian kernel and let $\tilde \mu_{\sigma}(x) = \int g_{\sigma}(x-y)\d\mu_0^*(y)$. We consider the transport plan $\gamma \in \Pi(\tilde \mu_0,\tilde \mu_\sigma)$ given by the joint law of $(X,X+Z)$ for $\mathrm{Law}(X)=\tilde \mu_0 =\mu_0^*$ and $\mathrm{Law}(Z)=g_\sigma$. On the one hand, it holds by convexity of $G$
\begin{align*}
0\geq G(\tilde \mu_0) - G(\tilde \mu_\sigma) 
&\geq \int V[\tilde \mu_\sigma]\d[\tilde \mu_0-\tilde \mu_\sigma]\\
&=\int (V[\tilde \mu_\sigma](y)-V[\tilde \mu_\sigma](x))\d\gamma(x,y).
\end{align*}
It follows, using the smoothness bound $ V[\tilde \mu_\sigma](x)-V[\tilde \mu_\sigma](y)\leq \nabla V[\tilde \mu_\sigma](y)^\top (x-y) + \frac{L}{2}\Vert y-x\Vert^2$ and the fact that the Gaussian kernel is centered, that
{
\begin{align*}
\vert G(\tilde \mu_0) - G(\tilde \mu_\sigma)\vert  &\leq 
 \int (V[\tilde \mu_\sigma](x)-V[\tilde \mu_\sigma](y))\d\gamma(x,y) 
\\
&\leq \int\nabla V[\tilde \mu_\sigma](x)^\top(y-x) \d\gamma(x,y) +\frac{L}{2}\int \Vert y-x\Vert^2\d\gamma(x,y) \\
&=0+\frac{L}{2}\sigma^2.
\end{align*}
}
On the other hand, we have by Jensen's inequality for the convex function $\varphi: s\mapsto s\log(s)$ and Fubini's theorem:
\begin{align*}
H(\tilde \mu_\sigma) &= \int \varphi\Big( \int g_{\sigma}(x-y)\d\mu_0(y)\Big) \d x\\
&\leq \int\Big( \int \varphi(g_\sigma(x-y))\d x\Big) \d\mu_0(y) = -\frac12 \big(1+\log(2\pi\sigma^2)\big)
\end{align*}
which is the entropy of the Gaussian distribution $g_\sigma$. Thus we have
$$
h(\tau)-h(0) \leq \inf_{\sigma> 0}  \frac{L}{2} \sigma^2 -\frac\tau2 \big(1+\log(2\pi\sigma^2)\big) \leq -\frac{\tau}{2}\log(\pi\tau)
$$
by choosing $\sigma^2=\tau/L$. Plugging the value of $\tau_t=\alpha/\log(t)$ we get, for some $C,C'>0$,
\begin{equation*}
G(\mu_t)-G(\mu_0^*) \leq \frac{\alpha}{2}\frac{(\log \log t-\log (\pi\alpha)}{\log t} +Ct^{-\kappa} + C'\frac{\alpha}{\log(t)} \leq C'' \frac{\log\log t}{\log t}.\qedhere
\end{equation*}
\end{proof}

In the proof of Theorem~\ref{th:annealing}, we used a lower bound on the value of $H(\mu)$ in terms of the functional value that is provided in the following lemma.
\begin{lemma}\label{lem:entropy-functional}
Under the assumptions of Theorem~\ref{th:annealing}, there exists $C_1,C_2,C_3>0$ such that for all $0<\tau\leq \tau_0$ and $\mu \in \Pp_2^a(\RR^d)$, it holds $F_\tau(\mu) \geq -C_3$ and
\begin{align*}
-H(\mu) \leq C_1 F_\tau(\mu) + C_2.
\end{align*}
\end{lemma}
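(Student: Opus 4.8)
The plan is to reduce both assertions to a single structural fact: that $G$ is \emph{coercive} in the second moment $M_2(\mu):=\int_{\RR^d}\Vert x\Vert^2\d\mu(x)$, with constants independent of $\tau$. Concretely, I claim there are $c,b'>0$ with
\[
G(\mu)\ \geq\ \tfrac{c}{2}M_2(\mu)-b'\qquad\text{for all }\mu\in\Pp_2(\RR^d).
\]
Granting this, the lemma follows from two elementary Gaussian comparisons. Writing $\gamma_\sigma=\mathcal N(0,\sigma^2 I_d)$ and using $H(\mu|\gamma_\sigma)\geq 0$ gives, for every $\sigma>0$,
\[
-H(\mu)\ \leq\ \tfrac{1}{2\sigma^2}M_2(\mu)+\tfrac{d}{2}\log(2\pi\sigma^2),
\qquad
H(\mu)\ \geq\ -\tfrac{1}{2\sigma^2}M_2(\mu)-\tfrac{d}{2}\log(2\pi\sigma^2).
\]
In particular $H(\mu)>-\infty$ for $\mu\in\Pp_2^a(\RR^d)$, so all quantities below are well defined and the cases $H(\mu)=+\infty$ are trivial.

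Combining coercivity with the second Gaussian bound yields $F_\tau(\mu)\geq \tfrac{c}{2}M_2(\mu)-b'-\tfrac{\tau}{2\sigma^2}M_2(\mu)-\tfrac{\tau d}{2}\log(2\pi\sigma^2)$. Choosing the single value $\sigma^2=2\tau_0/c$ makes the coefficient of $M_2$ at least $c/4$ uniformly over $\tau\in(0,\tau_0]$, so that $F_\tau(\mu)\geq \tfrac{c}{4}M_2(\mu)-C_3$ for a constant $C_3>0$. Dropping the nonnegative term gives the first claim $F_\tau(\mu)\geq -C_3$, while rearranging gives $M_2(\mu)\leq \tfrac{4}{c}\big(F_\tau(\mu)+C_3\big)$. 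Feeding this into the first Gaussian bound with any fixed $\sigma_0$ (say $\sigma_0=1$) produces $-H(\mu)\leq \tfrac{2}{c}\big(F_\tau(\mu)+C_3\big)+\tfrac d2\log(2\pi)=C_1F_\tau(\mu)+C_2$, with $C_1,C_2$ independent of $\tau$ and $\mu$. Thus the whole lemma rests on proving coercivity.

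The coercivity, and the only genuinely delicate step, I would obtain as follows. Applying the convexity inequality~\eqref{eq:convexity} at a fixed reference $\bar\mu$ (for concreteness $\bar\mu=\mu^*_{\tau_0}$, or any fixed element of $\Pp_2^a(\RR^d)$) gives $G(\mu)\geq G(\bar\mu)+\int V[\bar\mu]\d(\mu-\bar\mu)$, so coercivity reduces to a genuine quadratic lower bound $V[\bar\mu](x)\geq \tfrac{c}{2}\Vert x\Vert^2-b$; indeed the error term $\int V[\bar\mu]\d\bar\mu$ is finite because $V[\bar\mu]$ grows at most quadratically by Assumption~\ref{ass:regularity}. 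To get the quadratic lower bound I combine Assumptions~\ref{ass:regularity} and~\ref{ass:log-sobolev} at $\tau=\tau_0$. By Assumption~\ref{ass:log-sobolev}, $\nu\propto e^{-V[\bar\mu]/\tau_0}$ satisfies $\mathrm{LSI}(\rho_{\tau_0})$; the Herbst argument then yields Gaussian integrability $\int e^{s\Vert x\Vert^2}\d\nu<\infty$ for every $s<\rho_{\tau_0}/2$, so with $W:=V[\bar\mu]/\tau_0$ the function $\tilde W:=W-\tfrac{\rho_{\tau_0}}{4}\Vert\cdot\Vert^2$ satisfies $e^{-\tilde W}\in L^1(\RR^d)$. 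By Assumption~\ref{ass:regularity}, $\nabla\tilde W$ is Lipschitz, hence $\Vert\nabla\tilde W(y)\Vert\leq \Vert\nabla\tilde W(0)\Vert+\beta\Vert y\Vert$. Integrating the smoothness upper bound for $\tilde W$ over the unit ball centred at an arbitrary $y$ and comparing with finiteness of $\int e^{-\tilde W}$ forces $\tilde W(y)\geq -\beta\Vert y\Vert-C$, whence $W(y)\geq \tfrac{\rho_{\tau_0}}{4}\Vert y\Vert^2-\beta\Vert y\Vert-C\geq \tfrac{\rho_{\tau_0}}{8}\Vert y\Vert^2-C'$. Multiplying by $\tau_0$ gives the required bound on $V[\bar\mu]$ with $c=\tau_0\rho_{\tau_0}/4$.

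I expect this last point—upgrading the merely \emph{integrated} tail growth furnished by the log-Sobolev inequality to a \emph{pointwise} quadratic lower bound on the potential—to be the crux. It is precisely here that the smoothness of $V$ (bounded Hessian, precluding deep narrow wells) is indispensable, since $\mathrm{LSI}$ alone controls only tail probabilities and not pointwise values. Once this is in hand, the remainder is routine bookkeeping with Gaussian entropies and the single scale choice $\sigma^2=2\tau_0/c$, which absorbs the potentially very negative term $\tau H(\mu)$ into the coercive part of $G$.
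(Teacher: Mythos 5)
Your proof is correct and follows essentially the same route as the paper's: linearize $G$ via convexity at a fixed reference measure, use the Herbst argument on the uniform LSI to get Gaussian integrability of $e^{-V[\bar\mu]/\tau_0}$, upgrade this to a \emph{pointwise} quadratic lower bound on $V[\bar\mu]$ via the Lipschitz-gradient argument, deduce coercivity of $G$ in the second moment $M_2(\mu)$, and conclude with Gaussian entropy comparisons that absorb the $\tau H(\mu)$ term. The only differences are bookkeeping: the paper takes a $\tau$-dependent Gaussian scale $\sigma^2=\tau/C_1$ where you fix $\sigma^2=2\tau_0/c$, and the paper's pointwise bound on the potential is uniform where yours carries a harmless extra linear term.
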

\begin{proof}
In the following proof, $C_i,C_i',C_i''>0$ are constants independent from $\mu$ which value may change from line to line. Since by assumption the probability measure $\nu$ proportional to $e^{-V[\mu_0]}$ satisfies a logarithmic Sobolev inequality, there exists $C_1,C_2>0$ such that $\forall x\in \RR^d$, $V[\mu_0](x)\geq C_1\Vert x\Vert^2-C_2$. Indeed, by Herbst argument~\cite[Prop.~5.4.1]{bakry2014analysis}, there exists $C_1>0$ such that $\int e^{C_1\Vert x\Vert^2-V[\mu_0](x)}\d x <\infty$ and we conclude using the fact that if $f\in \Cc^1(\RR^d)$ has a Lipschitz gradient and $\int e^f\d x<\infty$ then $f$ must be upper-bounded. 

Letting $M_2(\mu) \coloneqq \int \Vert x\Vert^2\d\mu(x)$, it follows, using convexity of $G$, that
$$
G(\mu)\geq G(\mu_0) + \int V[\mu_0]\d(\mu-\mu_0)\geq 2C_1 M_2(\mu) - C_2.
$$
Invoking Lemma~\ref{lem:entropy-lower-bound} with $\sigma^2=\tau/C_1$ we have
$$
\tau H(\mu) \geq -C_1 M_2(\mu) - \tau - \tau d \log(2\tau \pi/C_1).
$$
Summing the two previous equations (with the same value of $C_1$), we get that for $\tau\leq \tau_0$,
$$
F_\tau(\mu) \geq C_1M_2(\mu) -C_2'.
$$
Combined with the fact that $-H(\mu)\leq C_1'M_2(\mu)+C_2'$, we get $- H(\mu)\leq C_1'' F_\tau(\mu) + C_2''$.
\end{proof}
See e.g.~\cite[Lem.~10.1]{mei2018mean} for a proof of the following lemma.
\begin{lemma}\label{lem:entropy-lower-bound}
For $\mu \in \Pp^a_2(\RR^d)$, let $M_2(\mu) \coloneqq \int \Vert x\Vert^2\d\mu(x)$. For any $\sigma^2>0$, it holds
$$
- H(\mu)\leq \frac{1}{\sigma^2} M_2(\mu) + 1 + d\log(2\pi\sigma^2).
$$
\end{lemma}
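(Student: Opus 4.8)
The plan is to lower-bound the entropy $H(\mu)$ by comparing $\mu$ with a centered isotropic Gaussian and invoking the non-negativity of relative entropy. Fix $\sigma^2>0$ and let $g_\sigma(x)=(2\pi\sigma^2)^{-d/2}\exp(-\Vert x\Vert^2/(2\sigma^2))$ be the Gaussian density already used in the proof of Theorem~\ref{th:annealing}, regarded now as a probability measure $\gamma_\sigma\in\Pp_2^a(\RR^d)$. Because $g_\sigma>0$ everywhere, every $\mu\in\Pp_2^a(\RR^d)$ is absolutely continuous with respect to $\gamma_\sigma$, so the relative entropy $H(\mu|\gamma_\sigma)$ from Section~\ref{sec:convergence} is well defined; as a Kullback--Leibler divergence it satisfies $H(\mu|\gamma_\sigma)\geq 0$, which is the only nontrivial input and follows from Jensen's inequality applied to the strictly convex function $s\mapsto s\log s$.

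First I would unfold this inequality into its two constituent pieces. Writing $\rho=\d\mu/\d x$ and using $\log(\d\mu/\d\gamma_\sigma)=\log\rho-\log g_\sigma$, we have formally $H(\mu|\gamma_\sigma)=H(\mu)-\int \log g_\sigma\,\d\mu$. Since $\log g_\sigma(x)=-\tfrac d2\log(2\pi\sigma^2)-\tfrac{1}{2\sigma^2}\Vert x\Vert^2$, the term $\int \log g_\sigma\,\d\mu=-\tfrac d2\log(2\pi\sigma^2)-\tfrac{1}{2\sigma^2}M_2(\mu)$ is finite because $M_2(\mu)<\infty$. Hence from $H(\mu|\gamma_\sigma)\geq0$ one reads off the clean lower bound $H(\mu)\geq -\tfrac d2\log(2\pi\sigma^2)-\tfrac{1}{2\sigma^2}M_2(\mu)$; in particular $H(\mu)>-\infty$, so the negative entropy cannot diverge while the second moment stays finite.

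Rearranging yields $-H(\mu)\leq \tfrac{1}{2\sigma^2}M_2(\mu)+\tfrac d2\log(2\pi\sigma^2)$, which is in fact sharp, with equality when $\mu=\gamma_\sigma$. The statement of the lemma is a convenient, slightly loosened form of this (it replaces $\tfrac1{2\sigma^2}$ and $\tfrac d2\log(2\pi\sigma^2)$ by $\tfrac1{\sigma^2}$ and $d\log(2\pi\sigma^2)$ and adds a constant $1$), and either form is enough for the subsequent use in Lemma~\ref{lem:entropy-functional}. The only real obstacle is the measure-theoretic bookkeeping in the second step: one must make sure that $H(\mu)$ and $H(\mu|\gamma_\sigma)$ are each well defined as elements of $(-\infty,+\infty]$ and that the identity $H(\mu|\gamma_\sigma)=H(\mu)-\int\log g_\sigma\,\d\mu$ involves no $\infty-\infty$ cancellation. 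This is precisely what the finiteness of $\int\log g_\sigma\,\d\mu$ secures: the identity then holds with both sides in $(-\infty,+\infty]$, nonnegativity of the left-hand side transfers to the right, and the case $H(\mu)=+\infty$ makes the asserted inequality trivially true.
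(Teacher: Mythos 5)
Your core derivation is correct and is in fact the standard proof: the paper itself gives no argument, deferring to \cite[Lem.~10.1]{mei2018mean}, and the comparison with the Gaussian $g_\sigma$ via nonnegativity of the relative entropy, together with your careful bookkeeping (finiteness of $\int \log g_\sigma \dd\mu$ ruling out any $\infty-\infty$ cancellation, the case $H(\mu)=+\infty$ being trivial), is exactly the content of that citation. It yields the sharp bound $-H(\mu)\leq \frac{1}{2\sigma^2}M_2(\mu)+\frac d2\log(2\pi\sigma^2)$, with equality at $\mu=g_\sigma$, as you say. The gap is your final sentence: the printed lemma is \emph{not} a "slightly loosened form" of this bound uniformly in $\sigma$. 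Replacing $\frac{1}{2\sigma^2}$ by $\frac{1}{\sigma^2}$ and $\frac d2\log(2\pi\sigma^2)$ by $d\log(2\pi\sigma^2)$ only increases the right-hand side when $\log(2\pi\sigma^2)\geq 0$; when $\log(2\pi\sigma^2)$ is negative, doubling that term \emph{decreases} it, and the $+1$ does not compensate for small $\sigma^2$. Indeed, the statement as printed is false in that regime: take $\mu=g_\sigma$ itself, so that $-H(\mu)=\frac d2+\frac d2\log(2\pi\sigma^2)$ and $M_2(\mu)=d\sigma^2$, while the claimed right-hand side is $d+1+d\log(2\pi\sigma^2)$; the inequality fails as soon as $\log(2\pi\sigma^2)<-1-\frac 2d$ (e.g.\ $d=1$, $\sigma^2=10^{-4}$ gives $-H(\mu)\approx -3.19$ versus a right-hand side $\approx -5.37$). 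So no argument can close the step from your sharp inequality to the literal statement, because the literal statement does not hold for all $\sigma^2>0$.

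This is a constants bug in the lemma as stated rather than a flaw in your mathematics, but it is material to flag, because the downstream application in Lemma~\ref{lem:entropy-functional} invokes the lemma with $\sigma^2=\tau/C_1$ and $\tau\downarrow 0$ --- precisely the small-$\sigma^2$ regime where the printed constants break. The fix is to state the lemma with your coefficients, $-H(\mu)\leq \frac{1}{2\sigma^2}M_2(\mu)+1+\frac d2\log(2\pi\sigma^2)$, which your proof establishes for every $\sigma^2>0$ (the $+1$ is then genuine slack); substituting this version in Lemma~\ref{lem:entropy-functional} (e.g.\ with $\sigma^2=\tau/(2C_1)$) again gives $\tau H(\mu)\geq -C_1M_2(\mu)-O(\tau\log(1/\tau))$, so the annealing argument is unaffected. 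In short: your proof is the right one and even slightly more careful than the bare citation, but the concluding claim that "either form is enough" is a genuine gap --- the stated form neither follows from yours nor is true.
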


\section{Applications and experiments}
\label{sec:applications}

\subsection{Noisy GD on a wide two-layer neural network}\label{sec:NN}
We now show that our results apply to the training dynamics of certain wide $2$-layer neural networks trained with noisy gradient descent. 

Let us introduce the formulation of two-neural networks of arbitrary width parameterized by a probability measure, which is at the heart of the mean-field analysis of the training dynamics~\citep{nitanda2017stochastic,mei2018mean, sirignano2020mean, rotskoff2018parameters, chizat2018global}.
Consider a input/output data distribution $(z,y)\sim \Dd \in \Pp(\RR^n\times \RR)$, a loss function $\ell:\RR^2\to \RR_+$, a ``feature function'' $\Phi(z,x) \in \Cc(\RR^n\times \RR^d)$ and let
\begin{equation}\label{eq:2NN}
G(\mu) \coloneqq \E_{(z,y)\sim \Dd}\, \ell \Big( y, \int \Phi(z, x)\d\mu(x)\Big) + \frac{\lambda}{2} \int \Vert x\Vert^2\d\mu(x)
\end{equation}
where  $\lambda>0$ is regularization parameter. Typical choices for the loss are the logistic loss $\ell(y,y')=\log(1+\exp(-yy'))$ and the square loss $\ell(y,y')=\frac12 |y-y'|^2$ and in what follows, $\ell'$ denotes the derivative of $\ell$ with respect to $y'$.

When $\mu = \frac1m \sum_{i=1}^m \delta_{x_i}$ is an empirical distribution with $m$ atoms/particles, the function $G_m$ derived from $G$ as in Eq.~\eqref{eq:Gm} is exactly the risk with weight decay regularization for a two-layer neural network of width $m$. Thus noisy gradient descent for two-layer neural networks is equivalent to NPGD with $G$ defined in Eq~\eqref{eq:2NN}.

Let us give simple conditions under which our convergence theorems apply in this case.
\begin{proposition}\label{prop:NN}
Assume that $\ell$ is the square or the logistic loss, that $\vert \Phi\vert$ is bounded by $K>0$ and that $\Phi$ smooth in $x$, uniformly in $z$. Then Assumptions~\ref{ass:regularity} and \ref{ass:variational} are satisfied and the first variation of $G$ is given, for $\mu\in \Pp_2(\RR^d)$ and $x\in \RR^d$, by
$$
V[\mu](x)= \E_{(z,y)\sim \Dd}\, \ell' \Big( y, \int \Phi(z, x')\d\mu(x')\Big) \Phi(z,x) + \frac{\lambda}{2}\Vert x\Vert^2.
$$
Moreover Assumption~\ref{ass:log-sobolev} is satisfied when:
\begin{itemize}[label={--}]
\item $\ell$ is the logistic loss. Then we have $\rho_\tau \geq \frac{\lambda}{\tau}e^{-2K/\tau}$, or
\item $\ell$ is the square loss and $\vert \E [y|z]\vert \leq K'$ a.s. Then we have $\rho_\tau \geq \frac{\lambda}{\tau}e^{-2K(K+K')/\tau}$.
\end{itemize}

\end{proposition}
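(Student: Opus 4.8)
The plan is to establish the four assertions of Proposition~\ref{prop:NN} in turn: the formula for $V[\mu]$, then Assumption~\ref{ass:regularity}, then Assumption~\ref{ass:variational}, and finally the quantitative Assumption~\ref{ass:log-sobolev}. First I would compute the first-variation directly from Definition~\ref{def:variation}. Writing $\hat y_\mu(z)\coloneqq\int\Phi(z,x)\d\mu(x)$, the prediction along the segment $(1-\epsilon)\mu+\epsilon\nu$ is affine in $\epsilon$ with perturbation $\int\Phi(z,x)\d(\nu-\mu)(x)$, so differentiating the loss through $\ell'$ and the confinement term, then interchanging the $\epsilon$-derivative with $\E_{(z,y)\sim\Dd}$ and applying Fubini, yields exactly the stated $V[\mu]$. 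The interchange is justified by dominated convergence, using $\vert\Phi\vert\le K$ together with the boundedness of $\ell'(y,\cdot)$ on the bounded range $[-K,K]$ of the predictions.

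Next, for Assumption~\ref{ass:regularity}, I would differentiate under the integral to obtain $\nabla V[\mu](x)=\E\,\ell'(y,\hat y_\mu(z))\nabla_x\Phi(z,x)+\lambda x$, which is continuous, so $V[\mu]\in\Cc^1$. For the joint Lipschitz estimate I would split $\nabla V[\mu](x)-\nabla V[\nu](y)$ through the intermediate point $\nabla V[\mu](y)$. The $x$-variation is controlled by the Lipschitz continuity of $x\mapsto\nabla_x\Phi(z,x)$ (uniform in $z$) plus the $\lambda x$ term, while the $\mu$-variation is controlled by the Lipschitz continuity of $\ell'$ in its second argument combined with the bound $\vert\hat y_\mu(z)-\hat y_\nu(z)\vert\le L_\Phi W_2(\mu,\nu)$, obtained by writing the difference as $\int(\Phi(z,x)-\Phi(z,x'))\d\gamma$ over an optimal coupling $\gamma\in\Pi(\mu,\nu)$ and using that $\Phi(z,\cdot)$ is $L_\Phi$-Lipschitz uniformly in $z$. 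The delicate point is that for the square loss $\ell'(y,\hat y_\mu(z))=\hat y_\mu(z)-y$ is unbounded because of the label $y$; I would handle the offending $\E[y\,\nabla_x\Phi(z,x)]$ terms by conditioning on $z$, so that the assumption $\vert\E[y\mid z]\vert\le K'$ renders them bounded. I expect this square-loss bookkeeping to be the main obstacle, since every estimate that relied on $\Vert\ell'\Vert_\infty$ in the logistic case must be re-derived through the conditional mean.

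For Assumption~\ref{ass:variational}, convexity of $G$ is immediate: $\mu\mapsto\hat y_\mu(z)$ is affine, $\ell(y,\cdot)$ is convex for both losses, and the confinement is linear in $\mu$, so $G$ is an average of convex functions of $\mu$ plus a linear term. Existence of a minimizer of $F_\tau$ follows from the direct method: the confinement gives $G(\mu)\ge\frac{\lambda}{2}M_2(\mu)$, and combining this with the entropy lower bound of Lemma~\ref{lem:entropy-lower-bound} (choosing the free parameter so the negative second-moment contribution is dominated) yields coercivity $F_\tau(\mu)\ge\frac{\lambda}{4}M_2(\mu)-C$; coercivity gives tightness of a minimizing sequence, and lower semicontinuity (weak continuity of $\mu\mapsto\hat y_\mu(z)$ under bounded second moments together with lower semicontinuity of $H$) then produces a minimizer.

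Finally, for Assumption~\ref{ass:log-sobolev}, which carries the quantitative content, I would write $V[\mu](x)=W[\mu](x)+\frac{\lambda}{2}\Vert x\Vert^2$ with $W[\mu](x)\coloneqq\E\,\ell'(y,\hat y_\mu(z))\Phi(z,x)$, so that $e^{-V[\mu]/\tau}$ is the Gaussian $\propto e^{-\lambda\Vert x\Vert^2/(2\tau)}$ reweighted by $e^{-W[\mu]/\tau}$. The Gaussian has Hessian $\frac{\lambda}{\tau}I_d$ and hence satisfies $\mathrm{LSI}(\lambda/\tau)$ by the Bakry--\'Emery criterion \citep{bakry1985diffusions}, and I would then apply the Holley--Stroock perturbation criterion \citep{holley1987logarithmic} to the bounded weight $\psi=W[\mu]/\tau$, giving $\rho_\tau\ge\frac{\lambda}{\tau}e^{-(\sup W[\mu]-\inf W[\mu])/\tau}$. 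It remains to bound the oscillation of $W[\mu]$ uniformly in $\mu$: for the logistic loss $\vert\ell'\vert\le1$ and $\vert\Phi\vert\le K$ give $\Vert W[\mu]\Vert_\infty\le K$, hence oscillation at most $2K$ and $\rho_\tau\ge\frac{\lambda}{\tau}e^{-2K/\tau}$; for the square loss I would split $W[\mu](x)=\E[\hat y_\mu(z)\Phi(z,x)]-\E[\E[y\mid z]\Phi(z,x)]$ and bound the two pieces by $K^2$ and $KK'$ respectively, giving $\Vert W[\mu]\Vert_\infty\le K(K+K')$ and $\rho_\tau\ge\frac{\lambda}{\tau}e^{-2K(K+K')/\tau}$. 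These match the stated constants, and once the boundedness of $W[\mu]$ is in hand this last part is essentially immediate.
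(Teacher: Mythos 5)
Your proposal is correct and takes essentially the same approach as the paper: the quantitative core---Bakry--\'Emery applied to the Gaussian factor $\propto e^{-\lambda\Vert x\Vert^2/(2\tau)}$ giving $\mathrm{LSI}(\lambda/\tau)$, followed by Holley--Stroock on the bounded perturbation with oscillation at most $2K$ (logistic) or $2K(K+K')$ (square loss)---coincides exactly with the paper's proof, as does the convexity-plus-direct-method argument for Assumption~\ref{ass:variational}. The only difference is one of detail: you verify the first-variation formula and Assumption~\ref{ass:regularity} by hand and give a slightly more careful coercivity estimate via Lemma~\ref{lem:entropy-lower-bound}, where the paper simply cites \citet{hu2019mean} for these points and asserts tightness of the sublevel sets directly.
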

\begin{proof}
For the computation of the first variation and Assumptions~\ref{ass:regularity}, we refer e.g.~to~\cite{hu2019mean}. For Assumption~\ref{ass:variational}, $G$ is convex as a composition of a linear operator and a convex function. To see that $F_\tau$ admits a minimizer, notice that thanks to the regularization term, the sublevel sets of $G$ are tight and thus weakly-precompact by Prokorov's theorem. Moreover, the loss term in $G$ is weakly continuous, the regularization term is weakly lower-semicontinuous (lsc) and $H$ is weakly lsc~\citep[Sec.~3.2]{ambrosio2007gradient} so, overall, $F_\tau$ is lsc. Thus a minimizer $\mu^*_\tau$ exists for all $\tau\geq 0$ by the Direct Method in the calculus of variations.

 Let us derive the lower-bound on the log-Sobolev constant $\rho_\tau$ using the criteria given below Assumption~\ref{ass:log-sobolev}. First, by the Bakry-\'{E}mery criterion, the probability measure $\propto e^{-\frac{\lambda}{2\tau}\Vert x\Vert^2}$ satisfies $\mathrm{LSI(\lambda/\tau)}$. Also, our assumptions guarantee that the first term in $V$ is uniformly bounded by $K$ -- in case of the logistic loss because $\vert \ell'(y,y')\vert\leq 1$ -- or by $K(K+K')$ -- in case of the square loss. We conclude by applying the  Holley-Stroock criterion with a perturbation $\psi$ that satisfies $\sup \psi -\inf \psi \leq 2K/\tau$ (for the logistic loss) or $\sup \psi -\inf \psi \leq 2K(K+K')/\tau$ (for the square loss).
\end{proof}

\paragraph{Limitations of this approach}
While the previous proposition, combined with our theorems, gives new convergence guarantees for noisy gradient descent on neural networks (in a certain limit), let us stress on the limitations of these results. First, the convergence proof fundamentally relies on the existence of noise and our analysis misses the fact that, in certain contexts, $G$ has a specific structure (of a different nature than the uniform LSI) that in practice seems to ease convergence and even make the noiseless dynamics converge, see e.g.~\cite{bach2021gradient}. Second, this approach introduces, in addition to weight decay, an entropic regularization which might be detrimental to the statistical performance.
Finally, the risk for a vanilla two-layer neural network with non-linearity $\phi:\mathbb{R}\to\mathbb{R}$ is obtained from Eq.~\eqref{eq:2NN} by taking 
\begin{align}\label{eq:vanilla-2NN}
\Phi(z,x)=a\phi(b^\top z)\quad \text{where}\quad x=(a,b)\in \RR \times \RR^{d-1}
\end{align}
which is not covered by our assumptions (in particular because it is not bounded). In the case of the ReLU non-linearity $\phi(s)=\max\{0,s\}$, there is in addition a lack of smoothness issue. 
An interesting direction for future research would be to adapt this algorithm and analysis in order to cover the case of ReLU non-linearities.

 \paragraph{Relaxing the boundedness assumption} In Proposition~\ref{prop:NN}, we assumed that $\Phi$ is bounded in order to obtain easy quantitative bounds on the LSI constant, but this assumption is not necessary. For instance, the Lyapunov condition in~\cite[Cor.~2.1]{cattiaux2010note} implies that if there exists $C_1,C_2,C_3>0$ such that for all $\mu\in \Pp_2(\RR^d)$, $V[\mu]\in \Cc^2(\RR^d)$ and
 \begin{align}\label{eq:LSIcond}
 \forall \Vert x\Vert_2\geq C_1,\quad x^\top \nabla V[\mu](x)\geq C_2\Vert x\Vert^2_2\quad \text{and}\quad \forall x\in \RR^d,\quad \nabla^2 V[\mu](x)\succeq -C_3 \Id
 \end{align}
then a uniform LSI holds (i.e.~Assumption~\ref{ass:log-sobolev} holds). 

As an illustration, consider a positively $2$-homogeneous and $\Cc^2$ (uniformly in $z$)  function $x\mapsto \Phi(x,z)$ (this does not cover functions of the form Eq.~\eqref{eq:vanilla-2NN} except in the simple case $\phi=\id$; a valid example is the square non-linearity $\Phi(x,z)=(x^\top z)^2$). Given the expression of $V$ in Proposition~\ref{prop:NN} and invoking Euler's identity $x^\top\nabla \Phi(x,z) = 2\Phi(x,z)$, we have
 $$
 x^\top \nabla V[\mu](x) = 2\E_{(z,y)\sim \Dd}\, \ell' \Big( y, \int \Phi(z, x')\d\mu(x')\Big) \Phi(z,x) + \lambda\Vert x\Vert^2.
 $$
 So, assuming $\ell'$ is absolutely bounded by some $M>0$, the first part of Eq.~\eqref{eq:LSIcond} holds for $\lambda$ large enough, namely $\lambda > 2M\E\vert \Phi(z,u)\vert$ for all $u\in \SS_{d-1}$. Moreover, the Hessian lower-bound is also satisfied since $\nabla^2 \Phi$ is $0$-homogeneous. Thus, in this simple example, Assumption~\ref{ass:log-sobolev} holds although $\Phi$ is unbounded with a quadratic growth (but this is at the price of requiring a strong weight decay regularization).

{
\subsection{Numerical illustration: kernel Maximum Mean Discrepancy}\label{sec:MMD}
We conclude this paper with numerical experiments exploring the behavior of NPGD\footnote{Link to Julia code to reproduce the experiments: \url{https://github.com/lchizat/2022-mean-field-langevin-rate}.} defined in~\eqref{eq:NPGD}. Let us stress that our theoretical guarantees only apply to the mean-field Langevin dynamics -- recovered in the many-particle and continuous time limit -- so there remains a gap between the theory and the NPGD algorithm.

We consider the torus $\Xx \coloneqq (\RR/(2\pi \ZZ))^d$ and the convex function defined on $\Pp(\Xx)$ by 
\begin{align}
G(\mu) \coloneqq  \frac12 \int k(x,y)\d\mu(x)\d\mu(y)-\int k(x,y)\d\mu(x)\d\nu(y)+\frac12\int k(x,y)\d\nu(x)\d\nu(y)
\end{align}
where $k\in \Cc^2(\Xx\times \Xx)$ is a smooth positive semi-definite kernel and $\nu \in \Pp(\Xx)$ a fixed probability measure. This function $G$ can be interpreted as the square kernel Maximum Mean Discrepancy (kMMD)~\citep{gretton2008kernel} between $\mu$ and $\nu$. This choice of function is convenient for numerical experiments because its minimum value is known and is $0$, attained in particular for $\mu=\nu$.  Although our theory was developed for $\Xx=\RR^d$, it is straightforward to adapt it to the torus, and our main convergence results apply, as shown below.
\begin{proposition}
The first variation of $G$ is given, for $\mu\in \Pp(\Xx)$ and $x\in \Xx$ by
$$
V[\mu](x)=\int_\Xx k(x,y)\d(\mu-\nu)(y).
$$
Moreover, Assumptions~\ref{ass:regularity},\ref{ass:variational} and \ref{ass:log-sobolev} are satisfied with $\rho_\tau \geq ((1+2\pi)d)^{-1} e^{(\inf k - \sup k)/\tau}$. \end{proposition}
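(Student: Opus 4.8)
The plan is to verify each of the three assumptions in turn for the kMMD functional on the torus $\Xx$, computing the first variation explicitly along the way. First I would establish the formula for $V[\mu]$. Writing $G(\mu) = \frac12\langle \mu-\nu, k(\mu-\nu)\rangle$ where $k$ acts by convolution-type integration against the kernel, I would differentiate along the path $(1-\epsilon)\mu+\epsilon\eta$ as in Definition~\ref{def:variation}. Since $G$ is quadratic in $\mu$, the directional derivative produces $\int_\Xx\int_\Xx k(x,y)\d(\mu-\nu)(y)\d(\eta-\mu)(x)$, which identifies $V[\mu](x)=\int_\Xx k(x,y)\d(\mu-\nu)(y)$ up to an additive constant; the positive semi-definiteness of $k$ gives convexity directly, since $G(\mu)=\frac12\langle\mu-\nu,k(\mu-\nu)\rangle\geq 0$ is a quadratic form with positive semi-definite Gram operator, handling the convexity part of Assumption~\ref{ass:variational}.

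Next I would check the smoothness Assumption~\ref{ass:regularity}. Because $k\in\Cc^2(\Xx\times\Xx)$ and $\Xx$ is compact, $\nabla_x V[\mu](x)=\int_\Xx \nabla_x k(x,y)\d(\mu-\nu)(y)$ exists and the Lipschitz-in-$x$ bound follows from the boundedness of $\nabla^2_x k$. For the Lipschitz-in-$\mu$ part, I would note that $\nabla_x V[\mu](x)-\nabla_x V[\mu'](x)=\int \nabla_x k(x,y)\d(\mu-\mu')(y)$ and control this by a $W_1$ (hence $W_2$) distance using the Lipschitz continuity of $y\mapsto\nabla_x k(x,y)$, which again holds uniformly by compactness and the $\Cc^2$ hypothesis. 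Existence of a minimizer of $F_\tau$ (the remaining half of Assumption~\ref{ass:variational}) follows from weak lower-semicontinuity: the quadratic term is weakly continuous and $H$ is weakly lsc, and $\Pp(\Xx)$ is weakly compact since $\Xx$ is compact, so the Direct Method applies.

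The main obstacle—and the quantitatively substantive step—is Assumption~\ref{ass:log-sobolev} with the explicit constant $\rho_\tau\geq((1+2\pi)d)^{-1}e^{(\inf k-\sup k)/\tau}$. Here I would use the two standard criteria recalled after Assumption~\ref{ass:log-sobolev}, adapted to the torus. The key observation is that $V[\mu]$ is bounded with oscillation controlled by $k$: for any $\mu$, $\sup_x V[\mu](x)-\inf_x V[\mu](x)\leq \sup k-\inf k$ since $\mu-\nu$ is a signed measure of total variation at most $2$ with zero mass, so $V[\mu]$ ranges within an interval of length $\sup k-\inf k$. The strategy is then to treat $\nu_\mu\propto e^{-V[\mu]/\tau}$ as a bounded Holley–Stroock perturbation of the uniform measure on the torus. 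I would first establish that the uniform measure on $\Xx=(\RR/2\pi\ZZ)^d$ satisfies $\mathrm{LSI}(\rho_0)$ with $\rho_0\geq((1+2\pi)d)^{-1}$ (the circle of circumference $2\pi$ satisfies an LSI with an explicit dimension-dependent constant that tensorizes over the $d$ coordinates), and then apply the Holley–Stroock perturbation bound with $\psi=V[\mu]/\tau$, whose oscillation is at most $(\sup k-\inf k)/\tau$, yielding the factor $e^{(\inf k-\sup k)/\tau}$. The delicate part is getting the uniform-measure LSI constant on the torus exactly right to produce the stated $((1+2\pi)d)^{-1}$ prefactor; I would either invoke a known spectral-gap/curvature computation for the torus or bound it via a comparison with a Poincaré-type inequality, being careful that the constant is uniform in $\mu$ so that the LSI holds with a single $\rho_\tau$ as the assumption demands.
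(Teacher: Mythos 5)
Your route is the same as the paper's: the first-variation computation for a quadratic functional, convexity from positive semi-definiteness of $k$, smoothness of $\nabla V$ from $k\in\Cc^2$ and compactness of $\Xx$, existence of a minimizer by the Direct Method, and then an LSI for the uniform measure on $\Xx$ combined with the Holley--Stroock perturbation criterion. For the torus LSI, the paper obtains the prefactor $((1+2\pi)d)^{-1}$ from the diameter bound of \citet[Thm.~7.3]{ledoux1999concentration}, namely $\rho_0\geq \pi^2/((1+2\pi)\diam(\Xx)^2)$ with $\diam(\Xx)=\pi\sqrt{d}$; your alternative suggestion of tensorizing a circle LSI would in fact give a dimension-free constant (tensorization does not degrade LSI constants), so this part, though left vague, is not the problem.

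The genuine gap is your oscillation bound. From ``$\mu-\nu$ has zero mass and total variation at most $2$'' one only gets a sup-norm bound: for any constant $c$, $V[\mu](x)=\int_\Xx (k(x,y)-c)\d(\mu-\nu)(y)$, and taking $c$ to be the midrange of $k$ gives $\Vert V[\mu]\Vert_\infty\leq \sup k-\inf k$, hence $\sup_x V[\mu](x)-\inf_x V[\mu](x)\leq 2(\sup k-\inf k)$, \emph{not} $\sup k-\inf k$ as you claim. Holley--Stroock applied with this (correct) oscillation bound yields only $\rho_\tau\geq ((1+2\pi)d)^{-1}e^{2(\inf k-\sup k)/\tau}$, i.e.\ the stated bound with the exponent doubled. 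Moreover, your stronger oscillation claim is actually false, so no repair of its justification is possible: on the circle ($d=1$) take $k(x,y)=\cos(x-y)$, which is smooth and positive semi-definite, $\nu=\delta_\pi$ and $\mu=\delta_0$ (mollify both if you insist on absolutely continuous measures); then $V[\mu](x)=\cos(x)-\cos(x-\pi)=2\cos(x)$, whose oscillation is $4=2(\sup k-\inf k)$. To be fair, the paper's own proof is a one-liner at this point (``follows by the Holley--Stroock criterion'') and its stated constant appears to rest on the same overstatement: applying the paper's own methodology from Proposition~\ref{prop:NN}, where a sup-norm bound $K$ on the perturbation correctly becomes an oscillation bound $2K$, one lands on the exponent $2(\inf k-\sup k)/\tau$ here as well. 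So you have essentially reproduced, and made explicit, a factor-of-$2$ slip; this does not show the stated lower bound on $\rho_\tau$ is false (in the cosine example the measure $\propto e^{-2\cos(x)/\tau}$ is unimodal and its true LSI constant is far better than the perturbation argument predicts), but it does show that your proof, and any proof running Holley--Stroock off the oscillation of $V[\mu]$, cannot reach the constant as stated.
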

\begin{proof}
The properties of $G$ and $V$ are obtained by standard arguments. For the log-Sobolev inequality, we note that the normalized volume measure on $\Xx$ satifies LSI with a constant larger than $\frac{\pi^2}{(1+2\pi)\mathrm{diam}(\Xx)^2}$~\cite[Thm.~7.3]{ledoux1999concentration} and here $\mathrm{diam}(\Xx)=\pi\sqrt{d}$. The lower bound on $\rho_\tau$ follows by the Holley-Stroock criterion.
\end{proof}

In our experiments, we consider $d=2$ and the translation invariant kernel $k(x,y) = \prod_{i=1}^d \big(1+2\sum_{k=1}^n (1+k)^{-1} \cos(k(x_i-y_i))\big)$ with $n=5$ frequency components. Because of the frequency cut-off, this kernel is not strictly positive definite, so $G$ admits minimizers other than $\nu$~\citep{sriperumbudur2010hilbert, simon2020metrizing} although in practice we observed that NPGD was in general attracted towards $\nu$. We take $\nu$ as a random empirical distribution of $m^*=10$ samples from the uniform distribution on $\Xx$. We run NPGD with $m=50$ particles, a step-size $\eta=0.08$ and $\mu_0$ being the uniform distribution on $\Xx$. 

Figure~\ref{fig:config} shows an example of a large-time particle configuration, with the atoms of $\nu$ is red and the atoms of $\hat \mu_t$ in black (with $t$ large), with a noise temperature $\tau=0.1$. Here the measure $\hat \mu_t$ is a noisy version of $\nu^*$.

Figure~\ref{fig:NPGD} shows the evolution of the objective $F_\tau=G+\tau H$ (up to a constant, adjusted for ease of comparison) along the iterations, where the entropy $H$ is estimated using the $1$-nearest-neighbor estimator~\citep{kozachenko1987sample,singh2003nearest}. We observe the exponential decay of $F_\tau$ towards a plateau which we expect to be the global minimum of $F_\tau$, up to discretization errors. For small values of $\tau$, it is not excluded that the plateau corresponds instead to a suboptimal metastable state.

Finally, Figure~\ref{fig:PGD} shows the advantage of NPGD with simulated annealing vs.\ PGD to minimize the unregularized function $G$. We used a noise temperature that decays polynomially as $\tau_t = 20(t+1)^{-1}$ where $t$ is the iteration count, which is a faster decay than what the theory suggests. At iteration $800$, we stopped the noise in order to observe the ``quality'' of the configuration of particles. We see that the NPGD with simulated annealing consistently outperforms PGD, which gets stuck in poorer local minima.

\begin{figure}
\centering
\begin{subfigure}{0.3\textwidth}
\centering
\includegraphics[scale=0.4]{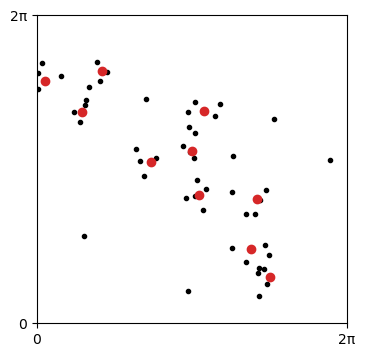}
\caption{Example of a large-time configuration of NPGD for $\tau=0.1$. (red) atoms of $\nu$ (black) atoms of $\hat \mu_t$.}\label{fig:config}
\end{subfigure}\hspace{0.4cm}%
\begin{subfigure}{0.3\textwidth}
\centering
\includegraphics[scale=0.5]{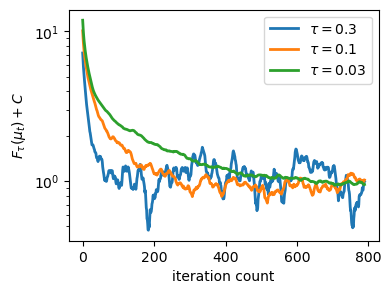}
\caption{Evolution of $F_\tau(\mu_t)$ (averaged over $10$ random experiments). Curves' height adjusted to end at $1$.}\label{fig:NPGD}
\end{subfigure}\hspace{0.4cm}%
\begin{subfigure}{0.3\textwidth}
\centering
\includegraphics[scale=0.5]{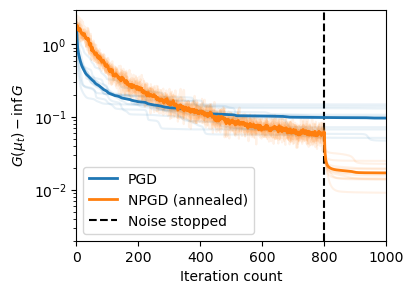}
\caption{Evolution of $G(\mu_t)$ for NPGD with simulated annealing vs.~PGD (averaged over $10$ experiments).}\label{fig:PGD}
\end{subfigure}
\end{figure}
}
\section{Conclusion}
We have proved the convergence of the Mean-Field Langevin dynamics to the global minimizer at an exponential rate, under natural assumptions that include all settings where (non-quantitative) convergence was previously shown. We have also proved the convergence of the annealed dynamics for a suitable noise decay.

From a higher perspective, our analysis---in particular the simple ``entropy sandwich'' Lemma~\ref{lem:sandwich}---suggests that often, the guarantees about Langevin dynamics obtained via log-Sobolev inequalities can be generalized to \emph{mean-field} Langevin dynamics. In this paper, we focused on exponential convergence and on simulated annealing, but other aspects could be considered, such as a direct analysis of the discrete dynamics, which could lead to computational bounds, as done 
in e.g.~\citep{vempala2019rapid,ma2019sampling} for the Langevin algorithm. 

Another interesting direction for future work is to develop and study more applications of Mean-Field Langevin dynamics, since many problems can be cast as optimization problems of the form Eq.~\eqref{eq:mainproblem}. This includes sparse deconvolution problems, mixture models fitting~\citep{boyd2017alternating} or problems involving optimal transport~\citep[Chap.~9]{peyre2019computational}.

\subsubsection*{Acknowledgments}
I would like to thank Loucas Pillaud-Vivien for orienting me through the literature of simulated annealing and Mo Zhou for noticing a gap in a previous version of the proof of Theorem~\ref{th:annealing}. I would also like to thank the anonymous reviewers for their insightful comments and for suggesting the discussion on LSI in Section~\ref{sec:NN}.

\bibliography{LC}
\bibliographystyle{tmlr}

\appendix

\section{Additional proofs}
Let us start with a relation between $G$ and its first-variation $V$ that is more convenient for proofs.
\begin{lemma}[Integral formula]\label{lem:integral-formula}
Under Assumption~\eqref{ass:regularity}, for $\mu_0,\mu_1\in \Pp_2(\RR^d)$, one has 
$$
G(\mu_1)-G(\mu_0) = \int_0^1 \int_{\RR^d} V[\mu_t]\d (\mu_1-\mu_0)\d t
$$
where $\mu_t=(1-t)\mu_0+t\mu_1$ for $t\in [0,1]$. 
\end{lemma}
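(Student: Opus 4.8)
The plan is to reduce this to a one-dimensional fundamental theorem of calculus along the mixture path. Set $g(t) \coloneqq G(\mu_t)$ with $\mu_t = (1-t)\mu_0 + t\mu_1$; the goal is to show that $g$ is $\Cc^1$ on $[0,1]$ with $g'(t) = \int_{\RR^d} V[\mu_t]\d(\mu_1-\mu_0)$, after which integrating from $0$ to $1$ yields the claim. The crucial observation is a reparametrization identity: for fixed $s\in[0,1)$ and $\epsilon>0$ small, $(1-\epsilon)\mu_s + \epsilon\mu_1 = \mu_{s+\epsilon(1-s)}$, as one checks by expanding the coefficients of $\mu_0$ and $\mu_1$. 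This lets me feed the mixture path directly into the definition of the first-variation rather than trying to force it through the continuity equation (which is awkward here, since $\partial_t\mu_t = \mu_1-\mu_0$ corresponds to adding mass, not transport, so Lemma~\ref{lem:chain-rule} does not directly apply).

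Second, I would extract the (one-sided) derivative of $g$. Applying Definition~\ref{def:variation} at the base point $\mu_s$ with $\nu = \mu_1$ and using the identity above, the limit $\lim_{\epsilon\downarrow 0}\frac1\epsilon\big(g(s+\epsilon(1-s)) - g(s)\big)$ exists and equals $\int V[\mu_s]\d(\mu_1-\mu_s)$. Rewriting the increment, this shows the right derivative $g'_+(s)$ exists with $(1-s)\,g'_+(s) = \int V[\mu_s]\d(\mu_1-\mu_s)$. Since $\mu_1 - \mu_s = (1-s)(\mu_1-\mu_0)$, the factor $(1-s)$ cancels and $g'_+(s) = \int V[\mu_s]\d(\mu_1-\mu_0)$ for every $s\in[0,1)$.

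Finally, I would upgrade this one-sided derivative to genuine differentiability and apply the fundamental theorem of calculus. Here Assumption~\ref{ass:regularity} does the work: I claim $s\mapsto \int V[\mu_s]\d(\mu_1-\mu_0)$ is continuous on $[0,1)$. The key point is that although $V[\mu]$ is only defined up to an additive constant, this constant integrates to zero against the signed measure $\mu_1-\mu_0$ (which has zero total mass), so the integral is unambiguous; and for two times $s,s'$ the difference $V[\mu_s]-V[\mu_{s'}]$ has gradient bounded by $L\,W_2(\mu_s,\mu_{s'})$, so testing it against $\mu_1-\mu_0$ through an optimal coupling gives a bound of order $L\,W_2(\mu_s,\mu_{s'})\,W_1(\mu_0,\mu_1)$, which vanishes as $s'\to s$ by continuity of the mixture path in $W_2$. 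A right derivative that is continuous on $[0,1)$, together with continuity of $g$, implies $g\in\Cc^1$ and $g(1)-g(0)=\int_0^1 g'(t)\d t$, which is exactly the stated formula. The main obstacle is this last regularity step --- making the differentiation rigorous rather than merely formal --- but it is routine given the uniform Lipschitz control in Assumption~\ref{ass:regularity} and the cancellation of the additive constant; the differentiation identity itself is immediate once the reparametrization is spotted.
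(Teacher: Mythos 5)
Your proposal follows essentially the same route as the paper's appendix proof: reduce to a one-variable problem along the mixture path, use the reparametrization identity $(1-\epsilon)\mu_s+\epsilon\mu_1=\mu_{s+\epsilon(1-s)}$ to feed the path into Definition~\ref{def:variation}, check that $t\mapsto\int V[\mu_t]\,\d(\mu_1-\mu_0)$ is continuous under Assumption~\ref{ass:regularity}, and conclude by the fundamental theorem of calculus (your observation that Lemma~\ref{lem:chain-rule} does not apply, since the mixture path is not a transport path, is also correct and is why the paper argues directly). The one step you assert without justification is the phrase ``together with continuity of $g$'', and this does need an argument: existence of right derivatives alone does not imply continuity (a step function has right derivative $0$ everywhere), and your right-derivative computation only yields \emph{right} continuity of $g$. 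To get left continuity at interior points $s$ --- and at $s=1$, which you also need to run the FTC on all of $[0,1]$ --- you must apply the first-variation definition at $\mu_s$ in the \emph{other} direction $\nu=\mu_0$, via the mirror identity $(1-\epsilon)\mu_s+\epsilon\mu_0=\mu_{s-s\epsilon}$; the existence of the limit in Definition~\ref{def:variation} then forces $G(\mu_{s-\delta})\to G(\mu_s)$. This mirror computation is precisely what the paper does: it evaluates \emph{both} one-sided derivatives (left by mixing with $\mu_0$, right by mixing with $\mu_1$), which delivers interior differentiability and continuity simultaneously and avoids any appeal to the classical ``continuous right derivative implies $\Cc^1$'' theorem. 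Once this small point is patched your argument is complete, and your continuity estimate for the derivative --- killing the additive constant of $V[\mu_s]-V[\mu_{s'}]$ against the zero-mass signed measure $\mu_1-\mu_0$ and bounding the pairing by $L\,W_2(\mu_s,\mu_{s'})\,W_1(\mu_0,\mu_1)$ --- is actually more explicit than the paper's one-line justification of that same step.
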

\begin{proof}
Let $h(t)=G(\mu_t)$. By definition of the first-variation, $h$ is right (resp. left) continuous at $t=0$ (resp. $t=1$). We just need to prove that $h$ is differentiable on $]0,1[$ with $h'(t)=\int V[\mu_t]\d(\mu_1-\mu_0)$. Then, because this expression is continuous in $t$ under Assumption~\ref{ass:regularity}, the fundamental theorem of calculus would imply $h(1)-h(0)=\int_0^1h'(t)\d t$, which is our claim. 
For $t,\epsilon \in {]0,1[}$ one has $(1-\epsilon)\mu_t +\epsilon \mu_0=\mu_{t-t\epsilon}$ and thus
\begin{multline*}
-th'_-(t) = \lim_{\epsilon \to 0_+} \frac{h(t-t\epsilon)-h(t)}{\epsilon} =  \lim_{\epsilon \to 0+} \frac{G((1-\epsilon)\mu_t+\epsilon\mu_0)-G(\mu_t)}{\epsilon} \\ = \int V[\mu_t] \d(\mu_0-\mu_t) = - t \int V[\mu_t] \d(\mu_1-\mu_0)
\end{multline*}
where $h'_-(t)$ stands for the left-derivative of $h$ at $t$. This shows that $h'_-(t)=\int V[\mu_t]\d(\mu_1-\mu_0)$ for $t\in {]0,1[}$. A similar computation using $(1-\epsilon)\mu_t+\epsilon\mu_1=\mu_{t+(1-t)\epsilon}$ shows that the right derivative $h'_+(t)$ has the same value, and thus $h'(t)=\int V[\mu_t]\d(\mu_1-\mu_0)$ for $t\in [0,1]$ which concludes the proof of the formula.
\end{proof}
 
In the following lemma, we verify that $G$ is well-behaved (in fact smooth) as function in the Wasserstein space $\Pp_2(\RR^d)$, using the vocabulary and results from~\cite{ambrosio2007gradient}.

\begin{lemma}\label{lem:Gsmoothness}
Let $\mu\in \Pp_2(\RR^d)$, let $v\in L^2(\mu)$ and let $\mu_t = (\id + tv)_\# \mu$. Then
$$
\frac{d}{dt} G(\mu_t) = \int_{\RR^d} \nabla V[\mu_t](x+tv(x))^\top v(x)\d\mu(x).
$$
Moreover, $G$ is $(-2L)$-semiconvex along any interpolating curve in $\Pp_2^a(\RR^d)$ and the $W_2$-derivative of $G$ at $\mu$ is $V[\mu]$. 
\end{lemma}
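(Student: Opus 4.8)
The plan is to prove the three assertions of Lemma~\ref{lem:Gsmoothness} in sequence, with the differentiation formula as the cornerstone from which the other two follow. First I would establish the derivative formula
$$
\frac{d}{dt} G(\mu_t) = \int_{\RR^d} \nabla V[\mu_t](x+tv(x))^\top v(x)\d\mu(x),
$$
where $\mu_t = (\id + tv)_\# \mu$. The natural route is to fix $t$ and compute the difference quotient $\frac{1}{s}(G(\mu_{t+s}) - G(\mu_t))$. Using the integral formula of Lemma~\ref{lem:integral-formula} applied to the pair $(\mu_t, \mu_{t+s})$, one can express $G(\mu_{t+s}) - G(\mu_t)$ as an integral of $V$ against $\mu_{t+s} - \mu_t$ along the linear interpolant. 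The cleaner alternative, which I would prefer, is to show $h(t) := G(\mu_t)$ has left- and right-derivatives equal to the claimed expression by a first-variation argument, but the curve $t \mapsto \mu_t$ here is a displacement (pushforward) interpolation rather than the linear mixture used in Lemma~\ref{lem:integral-formula}, so one cannot invoke it directly.

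The honest approach is therefore as follows. I would write $G(\mu_{t+s}) - G(\mu_t)$ and insert a linear mixture as an intermediate device is the wrong track; instead I would differentiate directly. Since $\mu_{t+s} = (\id + sv)_\# \mu_t$ (the pushforward semigroup property), I reduce to differentiating $G((\id+sv)_\#\mu_t)$ at $s=0$. Writing $G(\mu_{t+s}) = G(\mu_t) + \int_0^1 \int V[\mu_t^{(r)}]\, \d(\mu_{t+s}-\mu_t)\, \d r$ via Lemma~\ref{lem:integral-formula} with $\mu_t^{(r)} = (1-r)\mu_t + r\mu_{t+s}$, I divide by $s$ and pass to the limit. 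As $s\to 0$, $\mu_t^{(r)} \to \mu_t$ in $W_2$ uniformly in $r$, so by the Lipschitz continuity of $(\mu,x)\mapsto \nabla V[\mu](x)$ from Assumption~\ref{ass:regularity} one may replace $V[\mu_t^{(r)}]$ by $V[\mu_t]$ up to a controlled error. The term $\frac{1}{s}\int V[\mu_t]\,\d(\mu_{t+s}-\mu_t)$ is then evaluated by a first-order Taylor expansion of $V[\mu_t]$ along the displacement $x \mapsto x + sv(x)$, whose linear term is exactly $\int \nabla V[\mu_t](x)^\top v(x)\,\d\mu_t(x)$; shifting back to $\mu$ and accounting for the base point $x+tv(x)$ gives the formula.

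For the second assertion, semiconvexity, I would compute the second derivative (or a difference of first derivatives) of $h(t)=G(\mu_t)$ along a geodesic/interpolation and show $\frac{d}{dt}\int \nabla V[\mu_t](x+tv)^\top v\,\d\mu \geq -2L\|v\|_{L^2(\mu)}^2$. The bound follows by splitting the increment of the integrand into a part controlled by the spatial Lipschitz constant of $\nabla V[\mu]$ (contributing $-L\|v\|^2$) and a part controlled by the measure-dependence, where $W_2(\mu_t,\mu_{t+s}) \leq |s|\,\|v\|_{L^2(\mu)}$ contributes another $-L\|v\|^2$; this is precisely the origin of the factor $-2L$. The identification of the Wasserstein derivative (subdifferential) of $G$ at $\mu$ with $V[\mu]$ is then immediate from the first formula at $t=0$, since the derivative has the form $\int \nabla V[\mu]^\top v\,\d\mu$ for every tangent field $v$, which is the defining property of the $W_2$-gradient. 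The main obstacle I anticipate is the interchange of limit and integral in establishing the first formula: one must justify uniform control of the $r$-integral as $s\to 0$, which is where Assumption~\ref{ass:regularity} (uniform Lipschitz continuity of $\nabla V[\mu]$ jointly in $\mu$ and $x$) does the essential work, together with the elementary estimate $W_2((\id+sv)_\#\mu,\mu)\leq |s|\,\|v\|_{L^2(\mu)}$.
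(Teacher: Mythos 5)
Your overall route---a difference quotient, Lemma~\ref{lem:integral-formula} applied between $\mu_t$ and $\mu_{t+s}$, a Taylor expansion of $V$, control of the errors via Assumption~\ref{ass:regularity}, then semiconvexity from a $2L$-Lipschitz bound on $h'(t)$ split into a spatial part and a measure part, and the $W_2$-derivative read off from the formula at $t=0$---is exactly the paper's. But there is a genuine error at the heart of your derivation of the derivative formula: the ``pushforward semigroup property'' $\mu_{t+s}=(\id+sv)_\#\mu_t$ is false. Indeed $(\id+sv)\circ(\id+tv)(x)=x+tv(x)+sv(x+tv(x))$, which equals $x+(t+s)v(x)$ only when $v(x+tv(x))=v(x)$; moreover $v\in L^2(\mu)$ is only defined $\mu$-a.e., so it need not even make sense on the support of $\mu_t$. (Concretely, in $d=1$ take $\mu=\delta_0$ and the extension $v(x)=1-x$: then $\mu_{t+s}=\delta_{t+s}$ while $(\id+sv)_\#\mu_t=\delta_{t+s-st}$.) This false identity is what produces your claimed linear term $\int \nabla V[\mu_t](x)^\top v(x)\,\d\mu_t(x)$, which is not the derivative: pushing it back to $\mu$ by change of variables yields $\int \nabla V[\mu_t](x+tv(x))^\top v(x+tv(x))\,\d\mu(x)$, and no amount of ``accounting for the base point'' turns $v(x+tv(x))$ into the $v(x)$ appearing in the lemma. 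In the example above with $G(\mu)=\int V\d\mu$, your expression evaluates to $V'(t)(1-t)$ whereas $\frac{d}{dt}G(\mu_t)=V'(t)$.

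The fix is precisely what the paper does: couple $\mu_t$ and $\mu_{t+s}$ through $\mu$ itself, since both are pushforwards of $\mu$. For any intermediate measure $\mu_t^{(r)}$ one writes
$$
\int V[\mu_t^{(r)}]\,\d(\mu_{t+s}-\mu_t)=\int_{\RR^d}\Big(V[\mu_t^{(r)}]\big(x+(t+s)v(x)\big)-V[\mu_t^{(r)}]\big(x+tv(x)\big)\Big)\d\mu(x),
$$
and Taylor expansion at the base point $x+tv(x)$ with increment $sv(x)$ produces the inner product with $v(x)$, as in the statement. This rewriting also repairs a secondary weakness in your argument: you propose to replace $V[\mu_t^{(r)}]$ by $V[\mu_t]$ ``up to a controlled error'' before expanding, but Assumption~\ref{ass:regularity} controls only gradients (first variations are defined up to additive constants), so pointwise differences of $V$ across measures are not controlled; the substitution is legitimate only after converting the integral against the zero-mass measure $\mu_{t+s}-\mu_t$ into pointwise differences as above, and then using $\Vert\nabla V[\mu_t^{(r)}]-\nabla V[\mu_t]\Vert\le L\,W_2(\mu_t^{(r)},\mu_t)$ at the level of gradients. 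Your treatment of the semiconvexity claim (the factor $2L$ arising as spatial plus measure Lipschitz contributions, with $W_2(\mu_s,\mu_t)\le|t-s|\Vert v\Vert_{L^2(\mu)}$) and of the $W_2$-derivative identification agrees with the paper and is fine once the derivative formula is correctly established.
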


Since the same holds true for $-G$, we could say that $G$ is $2L$-smooth in the Wasserstein geometry,
in the sense that it is both $2L$-semiconvex and $2L$-semiconvex.

\begin{proof}
For $\epsilon>0$ and $s\in [0,1]$, let $\mu_s = (1-s) \mu + s\mu_{t+\epsilon}$. It holds by Lemma~\ref{lem:integral-formula},
\begin{align*}
\frac1\epsilon (G(\mu_{t+\epsilon})-G(\mu_t)) 
&= \frac1\epsilon \int_0^1 \int_{\RR^d} V[\mu_{s}]\d (\mu_{t+\epsilon}-\mu_t)\\
& = \frac1\epsilon \int_0^1 \int_{\RR^d} (V[\mu_s](x+(t+\epsilon)v(x))- V[\mu_s](x+tv(x)))\d \mu(x)\\
&= \int_0^1 \int_{\RR^d} \nabla V[\mu_s](x+tv(x))^\top  v(x)\d\mu(x) +O(L \epsilon \Vert v\Vert_{L^2(\mu)})\\
&= \int_{\RR^d} \nabla V[\mu](x+tv(x))^\top  v(x)\d\mu(x) +O(L \epsilon \Vert v\Vert_{L^2(\mu)})
\end{align*}
where we used successively the Lipschitz continuity of $x\mapsto \nabla V[\mu](x)$ and of $\mu \mapsto \nabla V[\mu](x)$ in the last two lines. The first claim follows by taking the limit $\epsilon \to 0$. This also shows that $V[\mu]$ is the unique (strong) $W_2$-differential of $W_2$ at $\mu$, in the sense of~\cite[Def.~4.1]{ambrosio2007gradient}.

For the semi-convexity claim, let $h(t)\coloneqq G(\mu_t)$. For $s,t\in [0,1]$, it holds by Cauchy-Schwarz
\begin{align*}
\vert h'(t)-h'(s)\vert^2 
&\leq \Vert v\Vert^2_{L^2(\mu)} \int_{\RR^d} \Vert \nabla V[\mu_t](x+tv(x))-\nabla V[\mu_s](x+sv(x))\Vert^2 \d\mu(x)\\
&\leq  \Vert v\Vert^2_{L^2(\mu)} L^2 \int_{\RR^d} \big( W_2(\mu_s,\mu_t) +  \vert t-s\vert \Vert v(x)\Vert \big)^2\d\mu(x)\\
&\leq  \Vert v\Vert^2_{L^2(\mu)} L^2 (2W_2^2(\mu_s,\mu_t) + 2\vert t-s\vert^2 \Vert v\Vert^2_{L^2(\mu)}).
\end{align*}
Since $W_2(\mu_s,\mu_t)\leq \vert t-s\vert \Vert v\Vert_{L^2(\mu)}$, it follows
\begin{align*}
\vert h'(t)-h'(s)\vert \leq 2 L\vert t-s\vert \Vert v\Vert^2_{L^2(\mu)}
\end{align*}
which proves that $G$ is $(-2L)$-convex in the sense of~\cite[Remark~3.2]{ambrosio2007gradient}. 
\end{proof}

\end{document}